\newcommand{\myemail}[1]{\href{mailto:#1}{#1}}
\title{3-braid knots with maximal  4-genus}
\author[Sebastian Baader]{S.~Baader}
\address{Mathematisches Institut, Sidlerstrasse~5, 3012 Bern, Switzerland}
\email{\myemail{sebastian.baader@unibe.ch}}
\author[Lukas Lewark]{L.~Lewark}
\address{Universit\"at Regensburg, 93053 Regensburg, Germany}
\email{\myemail{lukas@lewark.de}}
\urladdr{\url{www.lewark.de/lukas}}
\author[Filip Misev]{F.~Misev}
\address{Universit\"at Regensburg, 93053 Regensburg, Germany}
\email{\myemail{filip.misev@mathematik.uni-regensburg.de}}
\author[Paula Tru\"ol]{P.~Tru\"ol}
\address{ETH Z\"urich, R\"amistrasse 101, 8092 Z\"urich, Switzerland}
\email{\myemail{paulagtruoel@gmail.com}}
\urladdr{\url{https://people.math.ethz.ch/~ptruoel/}}
\subjclass{57K10}
\let\cref\Cref
  \def\\{}%
  \def\texttt#1{<#1>}%
\declaretheorem[numberwithin=section,name=Theorem]{theorem}
\newtheorem{corollary}[theorem]{Corollary}
\newtheorem{proposition}[theorem]{Proposition}
\newtheorem{lemma}[theorem]{Lemma}
\newtheorem{definition}[theorem]{Definition}
\newtheorem{question}[theorem]{Question}
\theoremstyle{remark}
\newtheorem{example}[theorem]{Example}
\newtheorem{remark}[theorem]{Remark}
\def\geq{\geqslant}
\def\leq{\leqslant}
\def\x{\times}
\DeclareMathOperator{\tu}{tu}
\def\gtop{g_4^{\text{top}}}
\def\Z{{\mathbb Z}}
\def\R{{\mathbb R}}
\DeclareMathOperator{\cl}{cl}
\DeclareMathOperator{\rev}{rev}
\newcommand{\hatsigma}{\widehat{\sigma}}
\def\zeta3{e^{2\pi i/3}}
\newcommand{\twist}{\rightarrowtail}
\begin{document}
\begin{abstract}
We classify 3-braid knots whose topological 4-genus coincides with their Seifert genus, using McCoy's twisting method and the Xu normal form. In addition, we give upper bounds for the topological 4-genus of positive and strongly quasipositive 3-braid knots.
\end{abstract}
\maketitle
\section{Introduction} 
Four decades after Freedman's celebrated work on 4-manifolds~\cite{F}, the topological 4-genus $\gtop(K)$ of knots~$K$ remains difficult to determine. The first challenge is posed by the figure-eight knot $4_1$,
which satisfies $\gtop(4_1)=1$, although 
its second power $4_1 \# 4_1$ bounds an embedded disc in the 4-ball.
This causes all its additive lower bounds on the 4-genus to be
trivial. In particular, the signature invariant $\sigma(4_1)$ is zero. Due to this example, there is little reason to believe that the inequality
$$|\sigma(K)| \leq 2\gtop(K)$$
has much to tell us about the (topological) 4-genus of knots in general. In this note, we show that closures of 3-braids are exceptional in this respect. More precisely, we will see that the figure-eight knot is
exceptional among closed 3-braids in that it is the only 3-braid knot~$K$ that satisfies $|\sigma(K)|<2g(K)$, yet $\gtop(K)=g(K)$, where $g(K)$ denotes the ordinary Seifert genus.

\begin{theorem} \label{thm:1} \label{theorem1}
Let $K$ be a $3$-braid knot other than the figure-eight knot. Then
\[ |\sigma(K)| = 2g(K) \quad\Longleftrightarrow\quad \gtop(K)=g(K). \]
These equalities hold precisely if $K$ or its mirror is one of the following knots:
\begin{itemize}
\item $T(2,2m+1)\# T(2,2n+1)$, with $m,n\geq 0$,
\item $P(2p, 2q+1, 2r+1, 1)$, with $p\geq 1$, $q,r\geq 0$,
\item $T(3,4)$, or $T(3,5)$.
\end{itemize}
\end{theorem}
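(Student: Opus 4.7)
The forward implication $|\sigma(K)|=2g(K) \Rightarrow \gtop(K)=g(K)$ is immediate from the universal bound $|\sigma(K)| \leq 2\gtop(K) \leq 2g(K)$, so the substance of the theorem lies in the converse. My plan is to enumerate the 3-braid knots with $|\sigma|=2g$ directly, and then exclude, via topological concordance, every other candidate for the equality $\gtop=g$.

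\emph{Enumeration.} Every 3-braid admits a Xu normal form: a canonical presentation (up to conjugation) as a signed word in a finite band-generator alphabet subject to a simple alternation rule. Both the Seifert genus of its closure (by Xu) and the signature of that closure (by Erle and Murasugi) can be read off combinatorially from such a word. Comparing the two formulas produces a finite list of Xu words whose closures satisfy $|\sigma|=2g$; a direct identification shows the resulting knots to be, up to mirror, precisely the connected sums $T(2,2m+1)\# T(2,2n+1)$, the pretzels $P(2p,2q+1,2r+1,1)$, $T(3,4)$, and $T(3,5)$.

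\emph{Exclusion.} For every remaining 3-braid knot $K\neq 4_1$ I need $\gtop(K)<g(K)$. This is where McCoy's twisting method enters: inside the Xu word for such a $K$ I would locate a pair of like-signed crossings encircled by an unknotted curve $\gamma \subset S^3\setminus K$ whose bounding disk meets $K$ algebraically trivially and has trivial Alexander polynomial in the exterior. Freedman's theorem then promotes $\gamma$ to the boundary of a locally flat disk in the exterior, so the $\pm 1$-twist along $\gamma$ is a topological concordance; if $\gamma$ is chosen to cancel the two crossings, the concordant 3-braid closure has strictly smaller Xu-word length, hence smaller Seifert genus, yielding $\gtop(K)<g(K)$.

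The main obstacle is this exclusion step: I expect the bulk of the work to be a systematic case analysis in the Xu normal form, showing that every 3-braid knot outside the three listed families and outside the conjugacy class of $\sigma_1\sigma_2^{-1}$ (which represents $4_1$) admits such a twisting curve. The figure-eight is precisely the exceptional case: its Xu word $\sigma_1\sigma_2^{-1}$ is too short to host a null-homologous twist curve reducing the genus, explaining why it must be excluded from the equivalence.
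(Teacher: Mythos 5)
The central gap is in your exclusion step. A null-homologous twist is \emph{not} a topological concordance: McCoy's theorem gives only $\gtop(K)\leq\tu(K)$, i.e.\ each twist costs at most one in the topological 4-genus (the proof runs through the algebraic genus, which changes by at most $1$ per twist). So a single twist chosen to cancel a pair of like-signed crossings reduces the Seifert genus by exactly $1$ and yields $\gtop(K)\leq\gtop(K')+1$ against $g(K)=g(K')+1$ --- no saving at all. Your appeal to Freedman to ``promote $\gamma$ to the boundary of a locally flat disk in the exterior'' is not coherent as stated: $\gamma$ is already unknotted in $S^3$, its disk necessarily meets $K$, and no Alexander-polynomial condition on $\gamma$ enters McCoy's method (Freedman is used only to conclude that the Alexander-polynomial-one knot produced at the end of the untwisting/surgery process is topologically slice). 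To get a strict inequality $\gtop(K)<g(K)$ one needs moves whose Seifert-genus reduction exceeds the number of twists spent; the paper's whole case analysis is organized around two such moves --- $abxabx\twist\varnothing$, two twists undoing a Seifert-genus-$3$ piece, and the efficient untwisting of $T(3,m)$ from \cite{BBL} --- applied after saddle-move cobordisms that realize the genus difference exactly. Without a concrete genus-saving move, your systematic search for ``a twisting curve cancelling two crossings'' cannot succeed even in principle.

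A second, independent problem is that twisting in the Xu normal form cannot be expected to handle all remaining knots, and the paper indeed does not attempt this: it twists only in the strongly quasipositive, non-braid-positive case, citing Liechti for positive braid closures, performing surgery on a Seifert subsurface bounding the ribbon knot $T(2,2n+1)\#T(2,-2n-1)$ for the connected-sum family, and invoking Lee--Lee's unknotting bound $u(K)<g(K)$ for all 3-braid knots that are neither strongly quasipositive (up to mirror) nor of connected-sum type. Your enumeration step differs in emphasis from the paper (which verifies $|\sigma|=2g$ directly for the listed families and lets the Kauffman--Taylor bound close the equivalence) but is logically viable; the exclusion step is where the proposal fails. (Minor point: the figure-eight knot is the closure of $(ab^{-1})^2$, not of $ab^{-1}$, whose closure is the unknot.)
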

The question arises whether the equivalence of $|\sigma(K)| = 2g(K)$ and $\gtop(K)=g(K)$ holds for other families of knots $K$. Indeed, it is also true for braid positive knots $K$~\cite{L}. Moreover, we do not know if there exists a knot $K$ of braid index $4$ with $\gtop(K) = g(K)$, but $|\sigma(K)| < 2g(K)$. One may check that such a knot would have to be prime and have crossing number at least 13.
For braid index $5$ however, there are several knots $K$ in the table with $\sigma(K) = 0 < 2g(K) = 2$ and $\gtop(K) = g(K) = 1$, such as $K = 8_1$~\cite{knotinfo}.

The proof of \cref{thm:1} is based on a technique called twisting, used by McCoy for estimating the topological 4-genus from above \cite{McC}. We will also make use of a special presentation for 3-braids called Xu normal
form, which we will describe in the next section. The third section contains the proof of \cref{thm:1}, as well as a complementary result (\cref{thm:2}): a sharp lower bound on the difference $g(K)-\gtop(K)$ of so-called strongly
quasipositive 3-braid knots, in terms of two characteristic quantities associated with their Xu normal form. 
In the last section, we determine the 
topological 4-genus of various families of braid positive 3-braid knots (almost) exactly, and display examples where our technique comes to a limit.\\

\emph{Acknowledgements:} The second author is supported by the Emmy Noether Programme of the DFG, Project number 412851057. 
The fourth author thanks Peter Feller for his constant encouragement and gratefully acknowledges support from the Swiss National Science Foundation Grant 181199.

\section{The Xu normal form of 3-braids}
\label{sec:Xu}
Our tool to handle 3-braids is what we call their \emph{Xu normal form}.
It was developed by Xu~\cite{Xu} (who called it \emph{representative symbol}),
as a variation of the Garside normal form~\cite{G}.
Using the Xu normal form, one may decide whether two given 3-braids are conjugate~\cite{Xu}, and whether their closures are equivalent links~\cite{BM1,BM2}. Later, the Xu normal form was generalized to braids on arbitrarily many strands by Birman--Ko--Lee~\cite{BKL}.
The Garside, Xu and BKL normal forms are all examples of Garside structures on (braid) %
groups~\cite{DDGKM}.
In this section, we will introduce the Xu normal form
and show how it determines the signature invariant $\sigma$ of its closure 
(\cref{prop:signature}),
as well as strong quasipositivity and braid positivity~(\cref{prop:positivity}).

A \emph{3-braid} $\beta$ is an element of the braid group $B_3 = \langle a,b \mid aba = bab \rangle$.
By closing off the three strands of $\beta$ and 
interpreting $a$ and $b$ as positive crossings between the first two and last two strands, respectively,
$\beta$ gives rise to a link called \emph{closure of $\beta$}, denoted $\cl(\beta)$.
Note that throughout the text, links are oriented.
Let us write $x \coloneqq a^{-1}ba \in B_3$ and $\delta \coloneqq ba = ax = xb \in B_3$. 
\begin{figure}[ht]
\begin{center}
\includegraphics[scale=1]{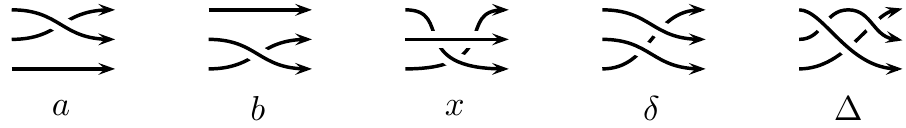}
\caption{The generators $a$ and $b$ of the braid group $B_3$ and the elements $x=a^{-1}ba$, $\delta=ba$ and $\Delta=aba$. The latter is used for the Garside normal form (further down in the text).} \label{fig:abxdelta}
\end{center}
\end{figure}
By a \emph{Xu word} or simply \emph{word}, we mean a word with letters $a,b,x,\delta$ (see \cref{fig:abxdelta}) and their inverses.
We reserve the equality sign $=$ for equality of braids, and use a dotted equality sign $\doteq$ for equality of words.
Moreover, we write $\beta \sim \gamma$ if the two braids $\beta, \gamma$ are conjugate. 
For efficiency, let us also introduce the following notation:
for any $i\in \Z$, set
$\tau_i \doteq a$ if $i\equiv 1 \pmod{3}$, $\tau_i \doteq b$ if $i\equiv 2 \pmod{3}$ and $\tau_i \doteq x$ if $i \equiv 0 \pmod{3}$.
\begin{definition}\label{def:bkl}
Let $w$ be a word of the form
\begin{equation}\label{eq:bkl}
w \doteq \delta^n \tau_1^{u_1} \tau_2^{u_2} \ldots \tau_t^{u_t}
\quad\text{for}\quad n\in \Z, t \geq 0, u_i \geq 1.
\end{equation}
We say that $w$ is in \emph{Xu normal form}
if the tuple $(-n,t,u_1, \ldots, u_t)$ is lexicographically minimal among all
words of the form \eqref{eq:bkl} representing the same conjugacy class of 3-braids.
\end{definition}
The condition of lexicographic minimality means, in particular, that
the Xu normal form maximizes $n$, and afterwards minimizes $t$.
The term `normal form' is justified by the following.
\begin{theorem}\label{thm:xuform}
Every 3-braid is conjugate to a unique word in Xu normal form.
\qed
\end{theorem}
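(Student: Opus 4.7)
\emph{Proof plan.} Since the Xu normal form is by definition the lex-minimal representative of a certain subset of tuples, uniqueness is automatic once that subset is shown to be nonempty and the lex minimum to exist. The content of the theorem therefore splits into two claims: (i) every conjugacy class of 3-braids contains at least one word of the form~\eqref{eq:bkl}; and (ii) the resulting set of tuples $(-n,t,u_1,\ldots,u_t)$ attains a lex minimum.

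Claim~(ii) is the easy half. The exponent sum $e\colon B_3 \to \Z$ is well-defined because the braid relation $aba=bab$ is homogeneous, and each of $a,b,x$ has exponent sum one; hence for any representation of the form~\eqref{eq:bkl}, $e(\beta) = 2n + \sum_{i=1}^t u_i$. Since $u_i \geq 1$, this gives $n \leq e(\beta)/2$, so $-n$ is bounded from below; combined with $t \geq 0$ and $u_i \geq 1$ (and the fact that $e(\beta)$ is fixed once $\beta$ is), this yields a lex minimum by iteratively minimizing one entry at a time.

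The core of the argument is Claim~(i), for which my plan is to invoke the Birman--Ko--Lee normal form~\cite{BKL}, essentially Xu's original construction~\cite{Xu} specialized to three strands. In the 3-strand BKL monoid, the atoms are $a,b,x$, the Garside element is $\delta$, and the simple elements are exactly $\{1,a,b,x,\delta\}$; among products of two atoms, only $ax$, $ba$, $xb$ equal $\delta$. Every 3-braid $\beta$ admits a left-greedy decomposition
\[ \beta = \delta^n A_1 A_2 \cdots A_s, \qquad A_i \in \{a,b,x\}, \]
and the left-weighted condition forbids $A_i A_{i+1} = \delta$. The allowed transitions are therefore precisely the repetitions $A_{i+1} = A_i$ and the cyclic advances $a\to b$, $b\to x$, $x\to a$. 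Grouping equal consecutive atoms into powers gives
\[ \beta = \delta^n \tau_{j}^{u_1} \tau_{j+1}^{u_2} \cdots \tau_{j+t-1}^{u_t} \]
for some $j\in\{1,2,3\}$. Finally, the relations $\delta = ba = ax = xb$ together with $aba = bab$ imply $\delta\tau_i\delta^{-1}=\tau_{i-1}$; conjugating by $\delta^{-k}$ therefore shifts every $\tau$-index up by $k$ without affecting the $\delta^n$ prefix, and choosing $k\equiv 1 - j \pmod 3$ produces a conjugate of $\beta$ in the required form.

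The main obstacle is the Garside-theoretic setup: the existence, uniqueness, and explicit left-weighted characterization of the BKL decomposition of $B_3$. For a self-contained treatment one would carry out the finite computations with the five simple elements $\{1,a,b,x,\delta\}$ by hand; alternatively, one appeals to~\cite{BKL,Xu,DDGKM}. After this, normalizing the starting index via $\delta$-conjugation and minimizing lex\-i\-co\-gra\-phi\-cal\-ly over the conjugacy class are routine bookkeeping.
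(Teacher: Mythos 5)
Your plan is correct and matches the paper's (sketched) argument: the paper establishes existence of a conjugate representative of the form \eqref{eq:bkl} via the rewriting rules \eqref{eq:xurules}, which is exactly the computation of the 3-strand Birman--Ko--Lee greedy decomposition you invoke, and the existence of the lexicographic minimum follows, as you note, from the exponent-sum bound $2n \leq e(\beta)$ (a point the paper leaves implicit). The paper likewise defers the normal-form machinery to Xu's paper, just as you defer it to \cite{BKL,Xu,DDGKM}, so the two arguments are essentially identical.
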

The following lemma gives a criterion to easily decide whether a word is in Xu normal form.
\begin{lemma}\label{lem:xuform}\label{lem:bkl}
A word $w \doteq \delta^n \tau_1^{u_1} \tau_2^{u_2} \ldots \tau_t^{u_t}$
for some $n\in \Z$, $t \geq 0$, $u_i \geq 1$
is in Xu normal form if and only if one of the following conditions holds:
\begin{enumerate}[label=(\alph*)]
\item $t = 0$. In this case $w \doteq \delta^n$.
\item $t = 1$, and if $n\equiv 1\pmod{3}$ then $u_1 = 1$.
In this case, $w \doteq \delta^{3k} a^{u_1}$, or $w \doteq \delta^{3k+1} a$, or $w \doteq \delta^{3k+2} a^{u_1}$.
\item $t \geq 2$, $n + t \equiv 0\pmod{3}$ and the tuple $(u_1, \ldots, u_t)$ is lexicographically minimal among its cyclic permutations.
\end{enumerate}
\end{lemma}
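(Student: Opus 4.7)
The plan is to exploit two basic identities that follow from $\delta = ba = ax = xb$: the commutation $\tau_i\, \delta^n = \delta^n\, \tau_{i+n}$ (equivalently $\delta\, \tau_i = \tau_{i-1}\, \delta$), which moves a power of $\delta$ past a letter at the cost of shifting its subscript; and the absorption $\tau_{i+1}\tau_i = \delta$, which recognizes each of $ba$, $xb$, $ax$ as $\delta$. Together these permit controlled rewrites of any word of the form \eqref{eq:bkl} within its conjugacy class.

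For the forward direction I would assume $w = \delta^n\tau_1^{u_1}\cdots\tau_t^{u_t}$ is in Xu normal form and split on $t$. The case $t = 0$ is immediate. For $t = 1$ with $n = 3k+1$ and $u_1 \geq 2$, conjugating by $a$ and combining both identities yields
\[ awa^{-1} = a\,\delta^{3k+1}\,a^{u_1-1} = \delta^{3k+1}\,\tau_{1+(3k+1)}\,a^{u_1-1} = \delta^{3k+1}\,b\,a^{u_1-1} = \delta^{3k+2}\,a^{u_1-2}, \]
a conjugate with strictly larger $\delta$-exponent and hence lex-smaller tuple, contradicting Xu minimality of $w$. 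So $u_1 = 1$, yielding (b). For $t \geq 2$, cyclically shifting blocks $\tau_1^{u_1},\ldots,\tau_{k-1}^{u_{k-1}}$ to the end and pushing $\delta^n$ back to the front via the commutation produces
\[ w \sim \delta^n\,\tau_{k+n}^{u_k}\,\tau_{k+1+n}^{u_{k+1}}\cdots\tau_{t+n}^{u_t}\,\tau_1^{u_1}\cdots\tau_{k-1}^{u_{k-1}}. \]
For this to again be of form \eqref{eq:bkl} (subscripts consecutive and starting at $1 \pmod 3$), one needs both $k+n \equiv 1$ and $n+t \equiv 0 \pmod 3$. Granted the latter, varying $k$ realizes all cyclic permutations of $(u_1,\ldots,u_t)$, so lex minimality forces cyclic minimality of the tuple.

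To prove that $n+t \equiv 0 \pmod 3$ is necessary, I would examine the shift $k = t$ (only $\tau_t^{u_t}$ moves to the front), giving the conjugate $\delta^n\,\tau_{t+n}^{u_t}\,\tau_1^{u_1}\cdots\tau_{t-1}^{u_{t-1}}$. When $n+t \equiv 1 \pmod 3$, one has $\tau_{t+n} = a = \tau_1$, so the two $a$-blocks merge into $a^{u_t+u_1}$, producing a form~\eqref{eq:bkl} conjugate with strictly smaller $t$. When $n+t \equiv 2 \pmod 3$, one has $\tau_{t+n} = b$ followed by $\tau_1 = a$; the absorption $ba = \delta$ extracts at least one additional $\delta$ from the boundary $b^{u_t}a^{u_1}$, and a short commutation cascade restores form~\eqref{eq:bkl} with strictly larger $\delta$-exponent. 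Either way Xu minimality is violated.

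For the reverse direction, I would invoke the uniqueness half of \cref{thm:xuform}: the forward direction already shows the Xu normal form representative of every conjugacy class satisfies one of (a), (b), (c); it then remains to check that distinct words fulfilling these conditions represent distinct conjugacy classes, using invariance of the exponent sum $2n + \sum u_i$ together with the residue and cyclic-minimality conditions. The main obstacle is the technical bookkeeping for the $n+t \equiv 2 \pmod 3$ case, where the edge subcases $u_1 = 1$ or $u_t = 1$ each require an additional commutation cascade to land back in form~\eqref{eq:bkl} with the right subscript pattern.
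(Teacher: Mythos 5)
Your forward direction (Xu normal form $\Rightarrow$ (a), (b) or (c)) is essentially sound, and it follows the hints the paper gives right after the lemma; note, however, that the paper deliberately leaves that direction as an exercise and only writes out a proof of the \emph{if} direction. Your conjugation $a\,\delta^{3k+1}a^{u_1}a^{-1}=\delta^{3k+1}ba^{u_1-1}=\delta^{3k+2}a^{u_1-2}$ and the extraction of an extra $\delta$ when $n+t\equiv 2\pmod 3$ are correct. (One small slip: you do not need $k+n\equiv 1\pmod 3$ for the cyclic shift to be admissible, since a further conjugation by a power of $\delta$ re-normalizes all subscripts; every cyclic permutation is realized as soon as $n+t\equiv 0\pmod 3$.)

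The gap is in your reverse direction. Reducing it to the claim that distinct words satisfying (a), (b), (c) are never conjugate is logically fine, but that claim \emph{is} the whole content of the if direction, and the tools you name --- invariance of the exponent sum $2n+\sum u_i$ together with the residue and cyclic-minimality conditions --- cannot establish it. For instance, $abx^4$ and $a^2b^2x^2$ both satisfy (c) with $n=0$, $t=3$ and exponent sum $6$; nothing in your argument separates their conjugacy classes. Likewise, for $w\doteq\delta^n a^{u_1}$ with $u_1\geq 2$ and $n\not\equiv 1\pmod 3$, the exponent sum is perfectly consistent with $w$ being conjugate to a word with $\delta$-power $n+1$, so you cannot conclude that $n$ is maximal. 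The paper closes exactly these holes: it cites Xu's Theorem~5 for case (c), and for case (b) with $u_1\geq 2$ it invokes Xu's $r$-index and Theorem~4 to certify maximality of $n$ before combining the writhe with the minimality of $t$; only the easy cases (a) and (b) with $u_1=1$ follow from the writhe and the maximality of $n'$ alone, as you suggest. Without some substitute for Xu's Theorems 4 and 5 (or an independent conjugacy invariant finer than the abelianization), your proof of the if direction does not go through.
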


The proofs of \cref{thm:xuform} and \cref{lem:xuform} are essentially contained in Xu's paper \cite[Section~4]{Xu}, albeit with slightly different conventions.
In our setup, \cref{thm:xuform} is not actually hard to prove, and makes a good exercise to get acquainted with the calculus of Xu words. The same is true for the `only if' direction of \cref{lem:bkl}.
Let us provide two hints. Firstly, the easily verifiable rules
\begin{equation}\label{eq:xurules}
\delta = \tau_i \tau_{i-1},\qquad
\tau_i\delta^n = \delta^n\tau_{i+n},\qquad
\tau_i^{-1}\delta^n = \delta^{n-1}\tau_{i+n+1}
\end{equation}
allow
to find Xu words for every 3-braid without the letters $a^{-1}, b^{-1}, x^{-1}$,
and to `pull all $\delta^{\pm 1}$ to the left' in a Xu word.
In this way, one can find a Xu word of the form 
$\delta^n \tau_{m+1}^{u_{1}} \ldots \tau_{m+t}^{u_{t}}$
with $m\in\Z$ and $u_i \geq 1$ for any 3-braid. Secondly, note that
\begin{multline*}
\delta^n \tau_1^{u_1} \ldots \tau_t^{u_t} \ =\ 
\tau_{1-n}^{u_1}\delta^n \tau_2^{u_2} \ldots \tau_t^{u_t}
\ \sim\  \delta^n \tau_2^{u_2} \ldots \tau_t^{u_t} \tau_{1-n}^{u_1}\\
\sim\ \delta^{n+1} \tau_2^{u_2} \ldots \tau_t^{u_t} \tau_{1-n}^{u_1} \delta^{-1}
\ =\ \delta^n \tau_1^{u_2} \ldots \tau_{t-1}^{u_t} \tau_{-n}^{u_1}.
\end{multline*}
So cyclically permuting the tuple $(u_1, \ldots, u_t)$
results in a
conjugate braid if $-n \equiv t\pmod{3}$.
\begin{proof}[Proof of the `if' direction of \cref{lem:bkl}]
Xu proves that condition (c) is sufficient for $w$ to be in Xu normal form (see the definition of the representative symbol and Theorem~5 in ~\cite{Xu}), but omits a discussion of conditions (a) and (b).
So suppose $w$ satisfies (a) or (b), $w' = \delta^{n'} \tau_1^{u'_1} \ldots \tau_{t'}^{u'_{t'}}$
is in Xu normal form, and $w'\sim w$. We need to show that $w \doteq w'$.
Let us distinguish the following cases.
\begin{itemize}[leftmargin=0pt,itemindent=3.7em]
\item If $w$ satisfies (a), then $w \doteq \delta^n$. 
The words $w$ and $w'$ must have the same {\em writhe}, that is, the same image under the abelianisation homomorphism $B_3\to\Z$, which maps both $a$ and $b$ to $1$. Therefore, $2n = 2n' + u'_1 + \cdots + u'_{t'}$.
But since $w'$ is in Xu normal form, $n' \geq n$. Because $u'_i \geq 0$, it follows that $n' = n$ and $t' = 0$, thus $w \doteq w'$ as desired.
\item If $w$ satisfies (b) and $w\doteq \delta^n a$, a similar argument applies:
the equality of the
writhes of $w$ and $w'$ now reads as 
$2n + 1 = 2n' + u'_1 + \cdots + u'_{t'}$. Since the left hand side is odd, we must have $t'\geq 1$ and $u'_1\geq 1$, so $2n+1\geq 2n'+1$. On the other hand $n'\geq n$, again because $w'$ is in Xu normal form, so $2n+1\leq 2n'+1$. It follows that $n'=n$, $t' = 1, u'_1 = 1$ and $w \doteq w'$ as desired.
\item The remaining case is that $w \doteq \delta^n a^{u_1}$ satisfies (b), $u_1 \geq 2$, and $n\not\equiv 1\pmod{3}$.
Now the so-called $r$-index of $w$ defined below Lemma~5 in~\cite{Xu} is~0,
and so \cite[Theorem~4]{Xu} implies that $n$ is maximal. It follows that $n' = n$,
and hence the minimality of $t'$ implies $t' \leq t = 1$. 
Finally, from the equality of the writhes of $w$ and $w'$, it follows that $t' = 1$ and $u_1 = u'_1$, and thus $w \doteq w'$.\qedhere
\end{itemize}
\end{proof}
To get a link invariant from the Xu 
normal form,
we need to understand the relationship between conjugacy classes of 3-braids and link equivalence classes of their closures. Birman--Menasco have shown that with a few well-understood exceptions, this relationship is one-to-one:
\begin{figure}[b]
\begin{center}
\includegraphics[scale=1.1]{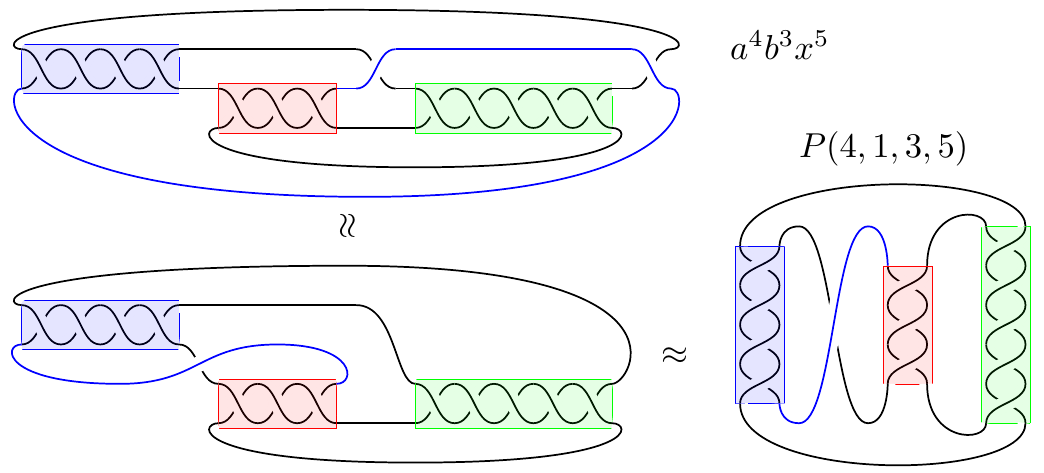}
\caption{Isotopy (denoted $\approx$) from the closure of the braid $a^{u_1}b^{u_2}x^{u_3}$ to the pretzel knot $P(u_1,1,u_2,u_3)\approx P(u_1,u_2,u_3,1)$; here $(u_1,u_2,u_3)=(4,3,5)$.} \label{fig:pretzel}
\end{center}
\end{figure}
\begin{theorem}[\cite{BM1,BM2}]\label{thm:braidtolink}
Two 3-braids are conjugate if their closures are equivalent links, except in the following cases:
\begin{enumerate}
\item The non-conjugate braids $ab, ab^{-1}, a^{-1} b^{-1}$ have the unknot as closure.
\item For $n\in\Z\setminus\{\pm 1\}$, the non-conjugate braids $a^n b, a^n b^{-1}$ have the $T(2,n)$ torus link as closure.
\item For pairwise distinct integers $p,q,r\in \Z\setminus \{0,-1,-2\}$, the two non-conjugate braids $\beta = a^p b^q x^r$ and $\gamma = a^p b^r x^q$ have the $P(p,q,r,1)$ pretzel link as closure (see \cref{fig:pretzel}); and the two non-conjugate braids $\beta^{-1}$ and $\gamma^{-1}$ have the $P(-p,-q,-r,-1)$ pretzel link as closure.
\end{enumerate}
\end{theorem}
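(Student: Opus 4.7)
The ``only if'' direction is immediate: the closure of a braid is invariant under conjugation. For the nontrivial direction, the natural tool is Markov's theorem, which asserts that two braids $\beta \in B_m, \gamma \in B_n$ have equivalent closures if and only if they are connected by a finite sequence of braid-group conjugations and Markov (de)stabilizations $B_k \leftrightarrow B_{k+1}$. The actual content of \cref{thm:braidtolink} is therefore that, for two 3-braids with equivalent closures, this sequence of moves can be arranged to stay within $B_3$ up to conjugation, \emph{except} when the closure lies in one of the three listed exceptional families.

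My plan is to follow the strategy of Birman and Menasco based on \emph{braid foliations} of spanning surfaces. Given an ambient isotopy realizing $\cl(\beta) \simeq \cl(\gamma)$, one foliates a Seifert surface for the common closure by its intersections with the pages of the open-book decomposition of $S^3$ around the braid axis. The resulting singular foliation records the braid up to conjugation, and local modifications of it (destabilizations, exchange moves, flypes) correspond directly to braid-level operations. The crucial step is a complexity-reduction argument showing that any isotopy between 3-braid closures can be realized by such modifications while keeping the braid in $B_3$, except precisely when one encounters (i) a destabilization to braid index $\le 2$ (producing the unknot or $T(2,n)$) or (ii) a foliation symmetry realizing an exchange move not coming from $B_3$-conjugation (producing the pretzel family).

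The main obstacle is the technical heart of \cite{BM1,BM2}: the ``Markov theorem without stabilization'' in the 3-strand setting, whose proof via braid foliations takes up a substantial portion of those papers. For the purposes of a proof proposal I would import it as a black box rather than redo the combinatorics from scratch.

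With that black box in hand, verifying the three exceptional families is a direct calculation. For (1) and (2), the writhe---the image under the abelianisation $B_3 \to \Z$ sending both $a$ and $b$ to $1$---already distinguishes the listed braids: for instance $\wri(a^n b) = n+1 \neq n-1 = \wri(a^n b^{-1})$ whenever $n \neq \pm 1$, so they are non-conjugate, while one checks by explicit Reidemeister moves that their closures are the unknot and $T(2,n)$, respectively. For (3), the candidates $a^p b^q x^r$ and $a^p b^r x^q$ share writhe $p+q+r$, so one distinguishes them instead via the Xu normal form (\cref{thm:xuform} together with \cref{lem:xuform}): under the hypotheses $p,q,r \in \Z\setminus\{0,-1,-2\}$ pairwise distinct, the tuples $(p,q,r)$ and $(p,r,q)$ are not cyclic permutations of one another, so after rewriting in normal form via the rules \eqref{eq:xurules} the two braids yield different Xu normal forms and are therefore non-conjugate. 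That they nevertheless close to the same oriented pretzel link $P(p,q,r,1)$ follows from the isotopy shown in \cref{fig:pretzel} together with the well-known symmetry of four-strand pretzel links containing a ``$1$-tangle'' under transposition of their remaining three parameters.
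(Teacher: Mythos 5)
The paper does not actually prove this theorem: it is imported from Birman--Menasco \cite{BM1,BM2}, and your proposal likewise defers the entire technical content (the ``Markov theorem without stabilization'' for 3-braids, proved via braid foliations) to those same references as a black box, so in substance the two approaches coincide. Your supplementary verification of the exceptional families is a sensible addition and mostly correct, but two points deserve care: the writhe distinguishes $a^n b$ from $a^n b^{-1}$ for \emph{every} $n$ (so the clause ``whenever $n\neq\pm1$'' is misplaced --- that exclusion only serves to avoid overlap with case (1), where the closures are unknots), and your Xu-normal-form argument for the non-conjugacy in case (3) is only actually carried out when $p,q,r$ are all positive, since only then is $a^pb^qx^r$ already of the form \eqref{eq:bkl} with $n=0$ and exponent tuple $(p,q,r)$; for exponents $\leq -3$ the rewriting via \eqref{eq:xurules} changes the power of $\delta$ and the exponent tuple nontrivially, so the assertion that the two resulting normal forms still differ needs to be computed rather than inferred from ``$(p,r,q)$ is not a cyclic permutation of $(p,q,r)$.''
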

The following corollary allows us to sidestep the exceptional cases (1), (2), (3) in the above theorem
by focusing on links of braid index 3 instead of 3-braid links
(the latter class of links includes links with braid index 1 and 2, i.e.~2-stranded torus links).
Let the \emph{reverse} of a braid $\beta\in B_3$, denoted by $\rev(\beta)$,
be the braid given by reading $\beta$ backwards and switching $a$ with $b$,
and $a^{-1}$ with $b^{-1}$. Note that $\cl(\rev(\beta))$ is obtained from $\cl(\beta)$ by reversing the link's orientation.
\begin{corollary}\label{cor:xu}
Let $L$ be a link of braid index 3.
\begin{enumerate}
\item Either there is a unique conjugacy class of 3-braids with closure $L$,
or there are two of them, such that one consists of the reverses of the braids contained in the other.
\item
The numbers $n$, $t$, and $U = u_1 + \cdots + u_t$
of the Xu normal form of a braid with closure $L$
do not depend on the choice of braid.
Thus $n$, $t$ and $U$ are link invariants of links with braid index 3.
\end{enumerate}
\end{corollary}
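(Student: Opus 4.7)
The plan is to combine \cref{thm:braidtolink} with a direct calculation of how the reverse map acts on Xu normal forms.

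For part~(1), suppose $\beta_1,\beta_2\in B_3$ both close to $L$. \cref{thm:braidtolink} forces either $\beta_1\sim\beta_2$ or $L$ to lie in one of the listed exceptional families. Cases~(1) and~(2) produce links (unknot, $T(2,n)$) of braid index at most~$2$, so are excluded by the hypothesis on $L$. In case~(3), the two classes are represented by $\beta=a^pb^qx^r$ and $\gamma=a^pb^rx^q$ (and, for the mirror $P(-p,-q,-r,-1)$, by their inverses). Using the anti-homomorphism property of $\rev$ together with $\rev(a)=b$, $\rev(b)=a$ and $\rev(x)=bab^{-1}=x$ (the last from the braid relation), one computes $\rev(\beta)=x^ra^qb^p$. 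Since $n+t=0+3\equiv 0\pmod 3$, the cyclic-shift identity derived just above the proof of \cref{lem:xuform} rotates $x^ra^qb^p$ into $a^pb^rx^q=\gamma$. Hence $[\gamma]=[\rev(\beta)]$, and the relation $\rev(\beta^{-1})=\rev(\beta)^{-1}$ handles the mirror case.

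For part~(2), \cref{thm:xuform} makes $n$, $t$ and $U$ conjugacy invariants; by part~(1), it remains to check they are preserved by $\rev$. Starting from $\beta=\delta^n\tau_1^{u_1}\cdots\tau_t^{u_t}$ and using $\rev(\delta)=\delta$, $\rev(\tau_i)=\tau_{-i}$ and the commutation rule $\tau_i\delta^n=\delta^n\tau_{i+n}$ from \eqref{eq:xurules}, one obtains
\[
\rev(\beta)=\delta^n\tau_{n-t}^{u_t}\tau_{n-t+1}^{u_{t-1}}\cdots\tau_{n-1}^{u_1}.
\]
Conjugating by $\delta^{n-t-1}$, which shifts each $\tau$-index down by $n-t-1$ via $\delta\tau_j\delta^{-1}=\tau_{j-1}$, produces
\[
\rev(\beta)\sim\delta^n\tau_1^{u_t}\tau_2^{u_{t-1}}\cdots\tau_t^{u_1},
\]
so $\rev(\beta)$ admits a Xu-type presentation with the same $\delta$-exponent $n$, the same number $t$ of $\tau$-factors, and the same exponent sum $U$.

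To conclude, note that the Xu normal form of $\rev(\beta)$ maximizes the $\delta$-exponent, so its value $n'$ satisfies $n'\geq n$; given $n'=n$, it then minimizes the number of $\tau$-factors, so $t'\leq t$. Applying the same computation to $\beta=\rev(\rev(\beta))$ (using $\rev^2=\mathrm{id}$) yields the reverse inequalities, forcing $n'=n$ and $t'=t$; then $U'=U$ follows from conjugacy invariance of the writhe $2n+U$. The main subtlety will be the index-translation step via conjugation by $\delta^{n-t-1}$, but this is routine given the commutation rules \eqref{eq:xurules}.
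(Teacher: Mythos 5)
Your proposal is correct and follows essentially the same route as the paper: part~(1) via \cref{thm:braidtolink} with the computation $\rev(a^pb^qx^r)=x^ra^qb^p\sim a^pb^rx^q$, and part~(2) by showing $\rev$ sends $\delta^n\tau_1^{u_1}\cdots\tau_t^{u_t}$ to a conjugate of $\delta^n\tau_1^{u_t}\cdots\tau_t^{u_1}$. The only (harmless) difference is that where the paper invokes the characterization of Xu normal forms to identify the normal form of the reverse directly, you instead use the extremality of $(-n,t)$ together with $\rev^2=\mathrm{id}$ to force $n'=n$, $t'=t$, and then the writhe to get $U'=U$.
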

\begin{proof}
Claim (1) follows quickly from \cref{thm:braidtolink},
since the unknot and the two-stranded torus links have braid index less than 3,
and $\rev(a^p b^q x^r) = x^r a^q b^p \sim a^p b^r x^q$.
For (2), note that $\rev(x) = x$ and so $\rev(\tau_i) = \tau_{-i}$. Also $\rev(\delta) = \delta$.
Thus the reverse of $\delta^n \tau_1^{u_1} \tau_2^{u_2} \ldots \tau_t^{u_t}$ is
$\tau_{-t}^{u_t} \ldots \tau_{-1}^{u_1} \delta^n$, which has Xu normal form
$\delta^n \tau_1^{u_t} \tau_2^{u_{t-1}} \ldots \tau_t^{u_1}$ (up to cyclically permuting the exponents $u_t, \ldots, u_1$).
So the numbers $n$, $t$ and $U$ do not change under braid reversal. Together with (1), this implies (2).
\end{proof}

Xu calls $n$ and $t$ the \emph{power} and \emph{syllable length},
while Birman--Ko--Lee use the terms \emph{infimum} and \emph{canonical length}, respectively.

Since the Xu normal form determines the link type, all link invariants may be read off it.
Let us first prove a formula for the signature invariant.
We will need the Garside normal form \cite{G}, which we introduce now. The reader will note many parallels between the Garside and Xu normal forms.
Let $\Delta\coloneqq aba$, as in \cref{fig:abxdelta} on the right. A \emph{Garside word} is a word with letters $a, b, \Delta$ and their inverses. Again, we use $\doteq$ for equality of words.
We also use the following notation:
for any $i\in \Z$, set $\sigma_i \doteq a$ if $i\equiv 1 \pmod{2}$ and $\sigma_i \doteq b$ if $i\equiv 0 \pmod{2}$.
\begin{proposition}[{\cite[Proposition 3.2]{T}}]\label{prop:garside}
Every 3-braid contains in its conjugacy class a unique Garside word $v$ in
\emph{Garside normal form}, i.e.\ a word
\[
v \doteq \Delta^{\ell} \sigma_1^{p_1} \sigma_2^{p_2} \ldots \sigma_r^{p_r},
\]
with $\ell \in \Z$, $r\geq 0$, $p_i\geq 1$, satisfying one of the following conditions:
\begin{enumerate}[leftmargin=4em]
\item[(A)] $\ell$ is even and $r\in \{0,1\}$, i.e.~$v \doteq \Delta^{2k} a^{\geq 0}$,
\item[(B)] $\ell$ is even, $r = 2$, $p_1\in \{1,2,3\}$ and $p_2 = 1$, i.e.~$v \doteq \Delta^{2k} a^{\{1,2,3\}} b$,
\item[(C)/(D)] $r \geq 1$, $p_i \geq 2$, $\ell \equiv r \pmod{2}$, and the tuple
$(p_1, \ldots, p_r)$ is lexicographically minimal among its cyclic permutations.
\end{enumerate}
We refer by (C) and (D) to the case that $\ell$ is even and odd, respectively.
\end{proposition}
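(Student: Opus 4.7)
My plan parallels the proof of \cref{thm:xuform} and \cref{lem:xuform}. The first step is to show that every 3-braid $\beta$ is conjugate to \emph{some} Garside word of the form $\Delta^{\ell}\sigma_{m+1}^{p_1}\sigma_{m+2}^{p_2}\cdots\sigma_{m+r}^{p_r}$ with $p_i\geq 1$. For this I rely on the braid relation $aba=bab$, which yields the commutation rules $\sigma_i\Delta^{\pm 1}=\Delta^{\pm 1}\sigma_{i\pm 1}$ together with the substitutions $a^{-1}\doteq ba\Delta^{-1}$ and $b^{-1}\doteq ab\Delta^{-1}$. These suffice to eliminate all negative letters and pull every factor of $\Delta^{\pm 1}$ to the far left; re-indexing then produces the form above with $m=0$.

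The second step classifies the conjugacy-preserving moves on such words. Conjugation by $\Delta$ shifts every index by $2$, so the parity of $m$ is an invariant of the ambient $\Delta$-orbit. The crucial observation is that conjugating the leading block $\sigma_1^{p_1}$ to the end rotates the exponent tuple by one position at the price of shifting the starting index by $1$. Iterating this rotation $r$ times and absorbing the accumulated index shift back into $\Delta$-conjugation returns the word to itself exactly when $\ell\equiv r\pmod{2}$; this explains the parity condition in case (C)/(D).

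For uniqueness, I would take the Garside normal form of $\beta$ to be the word which first maximises $\ell$ across the conjugacy class, then minimises $r$, and then takes the lexicographically smallest cyclic permutation of $(p_1,\ldots,p_r)$. Whenever $r\geq 2$ and all $p_i\geq 2$, this yields case (C) or (D). When some $p_i=1$, the identity $\Delta\doteq\sigma_i\sigma_{i+1}\sigma_i$ can typically be used to absorb three short syllables into an extra factor of $\Delta$, hence to increase $\ell$ or decrease $r$. Carrying out all such reductions leaves only the residual short words described in (A) and (B), within which uniqueness follows from the writhe identity $\wri(\beta)=3\ell+p_1+\cdots+p_r$ and the maximality of $\ell$, exactly as in the `if' direction of \cref{lem:bkl}. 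The main obstacle is the small-case analysis for (B): one must show that $\Delta^{2k}a^{p_1}b$ resists reduction precisely when $p_1\leq 3$, and that $\Delta^{2k}a^{p_1}b^{p_2}$ with $p_2\geq 2$ always admits a further move either increasing $\ell$ or producing a tuple falling into case (C)/(D). This delicate step, worked out in detail in \cite[Proposition~3.2]{T}, is where I would most directly defer to the reference rather than reconstruct the bookkeeping by hand.
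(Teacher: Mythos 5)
The paper does not prove this proposition at all: it is imported verbatim from \cite[Proposition~3.2]{T}, so there is no internal proof to compare yours against. Your sketch is therefore being measured only against the statement itself, and since you also end by deferring the decisive case analysis to the same reference, what you add beyond the paper's citation is the existence argument and the uniqueness scheme. The existence step (eliminating $a^{-1}\doteq ba\Delta^{-1}$, $b^{-1}\doteq ab\Delta^{-1}$ and pulling $\Delta^{\pm1}$ to the left) and the observation that cyclic permutation of $(p_1,\dots,p_r)$ preserves the conjugacy class exactly when $\ell\equiv r\pmod 2$ are both correct and parallel the treatment of the Xu form in \cref{sec:Xu}.

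The genuine gap is in your uniqueness recipe. You propose to define the normal form by first maximising $\ell$ over the conjugacy class, in analogy with \cref{def:bkl}. But the Garside normal form of the proposition does \emph{not} maximise $\ell$: since $\Delta= aba\sim a^2b$ and $\Delta a = aba^2\sim a^3b$, one has $\Delta^{2k}a^2b\sim\Delta^{2k+1}$ and $\Delta^{2k}a^3b\sim\Delta^{2k+1}a$, so the case~(B) words with $p_1\in\{2,3\}$ are conjugate to words with a strictly larger power of $\Delta$. Your greedy recipe would output $\Delta^{2k+1}$ (with $r=0$) and $\Delta^{2k+1}a$ (with $r=1$, $p_1=1$), neither of which satisfies any of (A), (B), (C)/(D) as stated, since (A) and (B) require $\ell$ even and (C)/(D) requires $p_i\geq 2$. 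So the analogy with the Xu normal form breaks down precisely in the degenerate cases that (B) is designed to absorb, and the uniqueness argument you model on the `if' direction of \cref{lem:bkl} (maximality of the $\Delta$-power plus the writhe identity) cannot be run as written. One would instead have to characterise the normal form as: case (C)/(D) whenever the conjugacy class contains a word $\Delta^\ell\sigma_1^{p_1}\cdots\sigma_r^{p_r}$ with $r\geq1$, all $p_i\geq2$ and $\ell\equiv r\pmod 2$ (and prove that such a word, when it exists, is unique up to cyclic permutation), with (A) and (B) enumerating the finitely many residual families $\Delta^{2k}a^{p}$, $\Delta^{2k}ab$, $\Delta^{2k+1}$, $\Delta^{2k+1}a$ — which is exactly the bookkeeping carried out in \cite{T} that you defer to.
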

The following lemma tells us how to convert between the Xu and the Garside normal forms.
\begin{lemma}\label{lem:convert}
Let a word $w \doteq \delta^n \tau_1^{u_1} \tau_2^{u_2} \ldots \tau_t^{u_t}$ in Xu normal form be given.
Then the unique word $v$ in Garside normal form representing the same conjugacy class of 3-braids as $w$
is given by the following table.\medskip

\noindent
\begin{tabular}{ll|ll}\hline\rule{0pt}{5ex}%
\parbox[b]{5em}{\raggedright Case in\\\cref{lem:bkl}} &
\parbox[b]{5em}{\raggedright Xu normal form $w$} &
\parbox[b]{7em}{\raggedright Garside normal\\ form $v$} &
\parbox[b]{7em}{Case in\\\cref{prop:garside}} \\\hline
\rule{0pt}{3ex}%
\emph{(a)} & $\delta^{3k}$           & $\Delta^{2k}$             & \emph{(A)} \\[1ex]
\emph{(a)} & $\delta^{3k+1}$         & $\Delta^{2k}ab$           & \emph{(B)} \\[1ex]
\emph{(a)} & $\delta^{3k+2}$         & $\Delta^{2k} a^3b$        & \emph{(B)} \\[1ex]
\emph{(b)} & $\delta^{3k} a^{u_1}$   & $\Delta^{2k} a^{u_1}$     & \emph{(A)} \\[1ex]
\emph{(b)} & $\delta^{3k+1} a$       & $\Delta^{2k} a^2 b$       & \emph{(B)} \\[1ex]
\emph{(b)} & $\delta^{3k+2} a^{u_1}$ & $\Delta^{2k+1} a^{1+u_1}$ & \emph{(D)} \\[1ex]
\emph{(c)} & $\delta^n \tau_1^{u_1} \ldots \tau_t^{u_t}$
                              & $\Delta^{(2n-t)/3} \sigma_1^{1+u_1} \ldots \sigma_t^{1+u_t}$
                                                          & \emph{(C)/(D)}
\raisebox{-1.5ex}{}\\\hline
\end{tabular}
\end{lemma}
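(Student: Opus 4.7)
Plan: Proceed by verifying the table row by row. Since \cref{prop:garside} asserts uniqueness of the Garside normal form within each conjugacy class, it suffices for each row to show that (i) the proposed Garside word $v$ is conjugate to $w$ as a 3-braid, and (ii) $v$ satisfies one of the conditions (A)--(D) of \cref{prop:garside}.

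The computations all rest on a handful of identities derived from the braid relation $aba = bab$: namely $\delta = ba$, $\Delta = aba$, $\Delta^2 = \delta^3$ (which is central in $B_3$), $\delta^2 = baba = (bab)a = \Delta a$, and the $\Delta$-conjugation rules $a\Delta = \Delta b$, $b\Delta = \Delta a$ (equivalently, $\sigma_i\Delta = \Delta\sigma_{i+1}$). The six rows of cases (a) and (b) then follow directly; the most involved one, $\delta^{3k+2}a^{u_1}$, unfolds as $\delta^{3k}\cdot\delta^2\cdot a^{u_1} = \Delta^{2k}\cdot\Delta a\cdot a^{u_1} = \Delta^{2k+1}a^{1+u_1}$, and this lies in Garside normal form (D) because $\ell = 2k+1$ is odd, $r = 1$, $p_1 = u_1+1\geq 2$, and $\ell\equiv r\pmod 2$. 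The other five rows are similar or simpler.

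The main case is (c), where $w = \delta^n\tau_1^{u_1}\cdots\tau_t^{u_t}$ with $t\geq 2$ and $n+t\equiv 0\pmod 3$. Here I would substitute $x^{u_i} = a^{-1}b^{u_i}a$ for each $\tau_i = x$ (at positions $i\equiv 0\pmod 3$), then use the identity $b^k a^{-1} = \Delta^{-1}a^{k+1}b$ (a direct consequence of $a^{-1} = \Delta^{-1}ab$ together with $b\Delta^{-1} = \Delta^{-1}a$) to eliminate each $a^{-1}$ at the cost of introducing a $\Delta^{-1}$. The $a$'s produced by expanding $x^{u_i}$ then merge with the adjacent $\tau_{i\pm 1}^{u_{i\pm 1}}$-blocks (which are powers of $a$), increasing the relevant exponents by one; likewise on the $b$-side from the $\Delta^{-1}$-introduction. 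Pushing all $\Delta^{\pm 1}$-factors past the $\sigma$-letters via $a\Delta^{\pm 1} = \Delta^{\pm 1}b$, $b\Delta^{\pm 1} = \Delta^{\pm 1}a$, and finally cyclically rotating to start with an $a$-block, yields $v = \Delta^{(2n-t)/3}\sigma_1^{1+u_1}\cdots\sigma_t^{1+u_t}$, possibly after a cyclic shift of the exponent tuple (which preserves the conjugacy class).

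Finally I would check that $v$ satisfies (C)/(D) of \cref{prop:garside}: since $u_i\geq 1$ one has $p_i = 1+u_i\geq 2$; lex-minimality of $(p_1,\ldots,p_t)$ among cyclic permutations is inherited from the corresponding property of $(u_1,\ldots,u_t)$ provided by \cref{lem:bkl}(c); and the parity condition $\ell\equiv r\pmod 2$, which reads $(2n-t)/3\equiv t\pmod 2$, simplifies to $3\mid(n-2t)$, which is equivalent to the hypothesis $3\mid(n+t)$. The sub-cases (C) and (D) are then distinguished by the parity of $\ell$. The main obstacle is the careful tracking of the $\Delta$-exponent through many $x$-blocks in case (c); a clean induction on $\lfloor t/3\rfloor$, the number of $x$-blocks, dispatches the bookkeeping, each additional $x$-block contributing exactly one extra $\Delta^{-1}$ factor and so producing the correct net exponent $(2n-t)/3$.
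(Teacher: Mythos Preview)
Your proposal is correct and follows essentially the same approach as the paper: for rows (a) and (b) you rely on $\delta^3=\Delta^2$ and $\delta^2=\Delta a$ just as the paper does, and for row (c) you substitute a $\Delta^{-1}$-expression for each $x$-block and then commute the $\Delta^{\pm 1}$'s past the $\sigma_i$'s---the paper uses the equivalent identity $x^u=\Delta^{-1}ab^{1+u}a$ and pushes $\Delta^{-1}$ to the right, then handles the boundary effects via an explicit three-way case split on $m$ (where $n=3k+m$), whereas you push left and propose induction on $\lfloor t/3\rfloor$ with a final cyclic rotation. Your added verification that the resulting word actually satisfies the (C)/(D) conditions of \cref{prop:garside} (in particular the parity check $\ell\equiv r\pmod 2$ via $3\mid(n-2t)\Leftrightarrow 3\mid(n+t)$) is a detail the paper leaves implicit.
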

\begin{proof}
All rows in the table except for the last one may be checked quickly, using $\delta^3 = \Delta^2$.
Let us now prove $w\sim v$ for the words $w$ and $v$ in the last row.
Let $n = 3k + m$ and $t = 3s + 3 - m$ for $k,s\in\Z, s\geq 0, m\in\{1,2,3\}$.
In the Xu word $w$, replace $\delta^n$ by $(ba)^m\Delta^{2k}$.
Moreover, replace every $x^u$ by $\Delta^{-1} ab^{1+u} a$.
These replacements yield a Garside word $v_1$ with $v_1 = w$ and
\[
v_1 \doteq (ba)^m \Delta^{2k} a^{u_1} b^{u_2} (\Delta^{-1} ab^{1+u_3}a) a^{u_4} \ldots
(\Delta^{-1} ab^{1+u_{3s}}a) a^{u_{3s+1}}b^{u_{3s+2}},
\]
where we set $u_i = 0$ if $i > t$.
Now proceed by `pulling all the $\Delta^{-1}$ to the right', i.e.~replacing
$\Delta^{-1}a$ by $b\Delta^{-1}$ and $\Delta^{-1}b$ by $a\Delta^{-1}$ as long as possible.
These replacements produce a word $v_2$ with $v_2 = v_1$, where $v_2$ starts with $(ba)^m\Delta^{2k} a^{u_1} b^{u_2} (ba^{1+u_3}b) b^{u_4} \ldots$.
Using the $\sigma_i$-notation and noting that there are precisely $s$ occurrences of $\Delta^{-1}$ in $v_1$, we have
\begin{align*}
v_2 & \doteq (ba)^m\Delta^{2k} \sigma_1^{u_1} \sigma_2^{1+u_2} \sigma_3^{1+u_3} \sigma_4^{1+u_4} \ldots \sigma_{3s}^{1+u_{3s}} \sigma_{3s+1}^{1+u_{3s+1}}\sigma_{3s+2}^{u_{3s+2}} \Delta^{-s} \\
\sim v_3  & \doteq \Delta^{-s}(ba)^m\Delta^{2k} \sigma_1^{u_1} \sigma_2^{1+u_2}  \ldots \sigma_{3s}^{1+u_{3s}} \sigma_{3s+1}^{1+u_{3s+1}}\sigma_{3s+2}^{u_{3s+2}}.
\end{align*}
Let us now consider the three possibilities for $m$ case by case.
\begin{itemize}[leftmargin=0pt,itemindent=3.7em]
\item If $m = 3$, then $u_{3s+2} = u_{3s+1} = 0$ and
\begin{align*}
v_3 & \doteq \Delta^{-s}(ba)^3\Delta^{2k} \sigma_1^{u_1} \sigma_2^{1+u_2} \ldots \sigma_{3s}^{1+u_{3s}} \sigma_{3s+1} \\
    & = \Delta^{2k + 2 - s} \sigma_1^{u_1} \sigma_2^{u_2 + 1} \ldots \sigma_{3s}^{1+u_{3s}}\sigma_{3s+1} \\
\sim v_4  & = \Delta^{2k + 2 - s} \sigma_1^{1+u_1} \sigma_2^{u_2 + 1} \ldots \sigma_{3s}^{1+u_{3s}}.
\end{align*}
We have $v_4 \doteq v$ as desired, since $2k + (m - 1) - s = (2n-t)/3$.
\item If $m = 2$, then $u_{3s+2} = 0$ 
and
\begin{align*}
v_3 & \doteq \Delta^{-s}(ba)^2\Delta^{2k} \sigma_1^{u_1} \sigma_2^{1+u_2} \ldots \sigma_{3s}^{1+u_{3s}} \sigma_{3s+1}^{1+u_{3s+1}} \\
    & = \Delta^{2k + 1 - s} \sigma_1^{1+u_1} \sigma_2^{1+u_2} \ldots \sigma_{3s}^{1+u_{3s}} \sigma_{3s+1}^{1+u_{3s+1}} \ \doteq v.
\end{align*}
\item If $m = 1$, then
\begin{align*}
 v_3  & \doteq \Delta^{-s}ba\Delta^{2k} \sigma_1^{u_1} \sigma_2^{1+u_2}  \ldots \sigma_{3s}^{1+u_{3s}} \sigma_{3s+1}^{1+u_{3s+1}} \sigma_{3s+2}^{u_{3s+2}} \\
      & = \sigma_{s} \Delta^{2k - s} \sigma_1^{1+u_1}\sigma_2^{1+u_2}  \ldots  \sigma_{3s+1}^{1+u_{3s+1}} \sigma_{3s+2}^{u_{3s+2}} \ \sim v.
\qedhere
\end{align*}
\end{itemize}
\end{proof}
We rely on the signature formula for 3-braids in Garside normal form deduced by the fourth author
\cite[Remark~1.6, Proposition~4.2, Remark~4.3]{T} from a result by Erle~\cite[Theorem~2.6]{E}.%
\begin{proposition}[\cite{T}]\label{prop:garsidesig}
Let $K$ be a knot that is the closure of a Garside normal form $\Delta^{\ell} \sigma_1^{p_1} \ldots \sigma_{r}^{p_r}$ in case (C)/(D) of \cref{prop:garside}. Then
\[
\sigma(K) = -2\ell + r - \sum_{i=1}^r p_i.
\]
\end{proposition}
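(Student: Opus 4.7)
The plan is to reduce the computation to Erle's complete signature determination for 3-braid closures \cite{E}, translating the Garside normal form into the canonical presentation Erle uses. Erle's theorem gives $\sigma(\cl(\beta))$ in terms of the exponents appearing in a specific stratification of conjugacy classes of 3-braids, and case (C)/(D) of \cref{prop:garside} coincides, up to notational adjustment, with one of his strata. So the proof is essentially a dictionary-check plus two independent computational ingredients, which I would handle separately before combining them.

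The first step is to isolate the contribution of $\Delta^{\ell}$. Multiplication by the full twist $\Delta^{2}$ acts predictably on the signature of a 3-braid closure: whenever the closure stays a knot, each factor of $\Delta^{2}$ decreases the signature by~$4$. This can be verified either via Wall's non-additivity formula applied to the obvious cobordism that realizes a full twist, or by a direct Seifert-matrix computation on the cabled Seifert surface. Together with a corresponding half-twist contribution in case~(D), this yields the term $-2\ell$; the parity condition $\ell\equiv r\pmod{2}$ built into case (C)/(D) is precisely what guarantees that we remain in the knot setting throughout the recursion.

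The second step is to compute the signature of the closure of the ``alternating'' positive 3-braid $\sigma_{1}^{p_{1}}\sigma_{2}^{p_{2}}\cdots\sigma_{r}^{p_{r}}$ with all $p_{i}\geq 2$. Its Bennequin surface has a tridiagonal Seifert matrix with diagonal entries determined by the $p_{i}$ and off-diagonal entries $\pm 1$, whose signature can be read off directly; alternatively, one may induct on $r$ using the Gordon--Litherland skein recursion, with base cases corresponding to torus knots $T(2,p)$. Either route yields $r - \sum_{i=1}^{r} p_{i}$, matching the remaining summand of the claimed formula.

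Adding the contributions of the two steps produces $\sigma(K) = -2\ell + r - \sum_{i=1}^{r} p_{i}$. The main obstacle I expect is reconciling sign and orientation conventions between Erle's canonical presentation and the Garside normal form, particularly at the interface between cases (C) and (D) where the parity of $\ell$ flips. A minor but necessary sanity check is that the right-hand side does not depend on the cyclic permutation chosen for $(p_{1},\ldots,p_{r})$; this is immediate, since both $r$ and $\sum p_{i}$ are manifestly invariant under cyclic permutation, so the formula descends unambiguously to the conjugacy class of the Garside normal form.
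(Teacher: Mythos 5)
The paper itself does not prove this proposition: it is imported verbatim from the fourth author's earlier work \cite[Remark~1.6, Proposition~4.2, Remark~4.3]{T}, where it is deduced from Erle's computation of signatures of $3$-braid links \cite[Theorem~2.6]{E}. Your opening plan of ``reducing to Erle'' is therefore consistent with what the authors actually do, but the independent two-step argument you then sketch has a genuine gap at its central step.

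Your first step asserts that each factor of $\Delta^{2}$ decreases the signature by exactly $4$ ``whenever the closure stays a knot.'' This is false. Since $\Delta^{2}=(ab)^{3}$ is central, $\cl(\Delta^{2}(ab)^{4})=\cl((ab)^{7})=T(3,7)$, which has $\sigma=-8$, while $\cl((ab)^{4})=T(3,4)$ has $\sigma=-6$: the drop is $2$, not $4$. Likewise passing from $T(3,2)$ ($\sigma=-2$) to $T(3,5)$ ($\sigma=-8$) drops the signature by $6$. (This is exactly why \cref{prop:signature} carries an error term for torus knots, which sit in cases (A)/(B) of \cref{prop:garside}.) The true change per full twist is $-4$ plus a correction term --- the Wall non-additivity defect, equivalently the Meyer cocycle evaluated on the Burau images, in the spirit of Gambaudo--Ghys --- and this correction does not vanish in general; its vanishing on the stratum $p_{i}\geq 2$ is precisely the content of Erle's theorem. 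So the claim cannot be ``verified via Wall's formula applied to the obvious cobordism'': Wall's formula is where the nonzero correction comes from, and your argument never uses the hypothesis $p_{i}\geq 2$ that makes it vanish. Your second step also needs repair (the $a$-loops and $b$-loops of the Bennequin surface of $\sigma_{1}^{p_{1}}\cdots\sigma_{r}^{p_{r}}$ intersect one another, so the symmetrized Seifert matrix is not tridiagonal in the obvious basis; and negative $\ell$ as well as the odd-$\ell$ base case of (D) require separate treatment), but those are fixable computations. The unproved, and as stated false, uniform ``$-4$ per full twist'' lemma is the real gap.
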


We are now ready to state and prove our signature formula for 3-braids in Xu normal form.
\begin{proposition}\label{prop:signature}
Let $\delta^n \tau_1^{u_1} \tau_2^{u_2} \ldots \tau_t^{u_t}$ be the Xu normal form
of a 3-braid whose closure is a knot $K$ of braid index 3.
Set $U = u_1 + \cdots + u_t$. If $t > 0$ (equivalently, if $K$ is not a torus knot), then
\[
\sigma(K) = -U - \frac43 n + \frac23t.
\]
In the case $t = 0$, i.e.\ $U = 0$ and $K = T(3,n)$, the value $\sigma(K) =  -\frac43 n$
given by the above formula is only approximately true, with an error of at most $\frac43$.
In fact, 
in that case we have
\[
\sigma(K) = - \frac43 n + \left(2 + 4\left\lfloor\frac16 n\right\rfloor - \frac23 n\right)
=
2 - 2n + 4\left\lfloor\frac16 n\right\rfloor.
\]
\end{proposition}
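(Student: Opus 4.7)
The plan is to reduce to the signature formula for the Garside normal form (\cref{prop:garsidesig}) by converting the Xu normal form via \cref{lem:convert}, and to treat the three cases of \cref{lem:bkl} separately.

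In the main case~(c) with $t \geq 2$, the last row of the table in \cref{lem:convert} identifies the Garside normal form as $\Delta^{(2n-t)/3}\sigma_1^{1+u_1}\cdots\sigma_t^{1+u_t}$, which sits in case~(C)/(D) of \cref{prop:garside}. Substituting $\ell=(2n-t)/3$, $r=t$, and $p_i = 1+u_i$ into \cref{prop:garsidesig} gives
\[
\sigma(K) \;=\; -\tfrac{2(2n-t)}{3} + t - \sum_{i=1}^{t}(1+u_i) \;=\; -U - \tfrac{4n}{3} + \tfrac{2t}{3},
\]
which is precisely the claimed formula.

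In case~(b) with $t=1$, I would first isolate the subcases whose closure is a knot by computing the permutation of $\delta^n a^{u_1}$ in~$S_3$: since $\delta$ induces the 3-cycle $(1\,3\,2)$ and $a$ induces the transposition $(1\,2)$, the closure is a knot precisely when $n\equiv 2\pmod 3$ and $u_1$ is even (the other $t=1$ subcase $\delta^{3k+1}a$ allowed by \cref{lem:bkl} has a two-component closure). Writing $n=3k+2$, the row $\delta^{3k+2}a^{u_1}$ of the table yields Garside normal form $\Delta^{2k+1}a^{1+u_1}$ in case~(D), to which \cref{prop:garsidesig} applies and delivers $\sigma(K) = -4k-2-u_1$; this equals $-U - \tfrac{4n}{3} + \tfrac{2t}{3}$ for $(n,t,U) = (3k+2,1,u_1)$.

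In case~(a) with $t=0$, the closure is the torus knot $T(3,n)$, which is a knot iff $\gcd(n,3)=1$. The claimed exact formula $\sigma(T(3,n)) = 2-2n+4\lfloor n/6\rfloor$ is the classical signature of the 3-strand torus knot, which I would cite from the literature or derive via Tristram--Levine techniques. A direct verification shows that $2+4\lfloor n/6\rfloor - \tfrac{2n}{3}$ takes the values $\tfrac{4}{3},\tfrac{2}{3},-\tfrac{2}{3},-\tfrac{4}{3}$ for $n \bmod 6 \in \{1,2,4,5\}$ respectively, confirming the stated error bound of $\tfrac{4}{3}$.

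The one genuine obstacle is case~(a): its Garside normal form falls in cases~(A) or~(B) of \cref{prop:garside}, which lie outside the scope of \cref{prop:garsidesig}, so the torus-knot signature must be supplied as an external input rather than obtained by the uniform Xu-to-Garside-to-signature machinery that dispatches cases~(b) and~(c) in one line each.
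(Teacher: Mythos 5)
Your proposal is correct and follows essentially the same route as the paper: cases (b) and (c) are dispatched by converting to the Garside normal form via \cref{lem:convert} and applying \cref{prop:garsidesig}, while the torus-knot case (a) is matched against the classical signature formula from the literature (the paper cites Murasugi). Your extra permutation argument pinning down $\delta^{3k+2}a^{u_1}$ as the only $t=1$ subcase closing to a knot is a detail the paper asserts without elaboration, not a different approach.
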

\begin{proof}
Our signature formula for torus knots may be seen to agree with
the formula given e.g.~in~\cite[Proposition~9.1]{M2}.
So we are left with the case $t\geq 1$, i.e.~cases (b) and (c) in \cref{lem:bkl}.
Denote the Xu normal form in question by $w$.
Let us first consider case~(c).
Then $w$ has Garside normal form $\Delta^{(2n-t)/3} \sigma_1^{1+u_1}\ldots\sigma_t^{1+u_t}$,
see \cref{lem:convert}. By \cref{prop:garsidesig},
we have $\sigma(K) = -4n/3 + 2t/3 + t - (U + t)$, which is equal to the claimed formula.
In case~(b), since the closure of $w$ is a knot, the only possibility is $w \doteq \delta^{3k+2} a^{u_1}$. Then the Garside normal form of $w$ is $\Delta^{2k+1} a^{1+u_1}$,
which has the desired signature, again by \cref{prop:garsidesig}.
\end{proof}
Next, let us give complete criteria to decide braid positivity and strong quasipositivity for links of braid index 3.
A \emph{braid positive} link is the closure of some \emph{positive word}, i.e.~a word in positive powers of the standard generators $a_1, \ldots, a_{k-1}$ of the braid group $B_k$ on some number $k$ of strands. Similarly, a link is called \emph{strongly quasipositive} \cite{R1,R2} if it is the closure of a \emph{strongly quasipositive word} in some $B_k$, i.e.~a word in positive powers of
\[
a_{ij} = a_i^{-1} a_{i+1}^{-1} \ldots a_{j-1}^{-1} a_j a_{j-1} \ldots a_i
\]
with $1\leq i \leq j \leq k - 1$.
Note that for $k = 3$, positive words are words in $a = a_1$, $b = a_2$ and strongly quasipositive words are words in $a = a_{11}$, $b = a_{22}$ and $x = a_{12}$.
It is well-known and straightforward to show that a $k$-braid can be written as a (strongly quasi-)positive braid word if and only if the power of $\Delta$ ($\delta$) in its Garside (Birman--Ko--Lee)
normal form is non-negative, respectively.
This makes (strong quasi-)positivity decidable for braids.
For links however, the problem is harder because
a priori, a braid positive (strongly quasipositive) link with braid index $k$
need not be the closure of a (strongly quasi-)positive word on $k$ strands.
For $k = 3$, however, this is the case:
\begin{theorem}[{\cite[Theorem~1.1 and 1.3]{S}}]\label{thm:posstoimenow}
The following hold. 
\begin{enumerate}
\item If a strongly quasipositive link is the closure of some 3-braid, then it is the closure of a strongly quasipositive 3-braid.
\item If a braid positive link is the closure of some 3-braid, then it is the closure of a positive 3-braid.
\end{enumerate}
\end{theorem}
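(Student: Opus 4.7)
The plan combines the characterization cited just above the theorem---that a 3-braid word is strongly quasipositive (resp.\ braid positive) if and only if the $\delta$-power in its BKL normal form (resp.\ $\Delta$-power in its Garside normal form) is non-negative---with the invariance of this power established in \cref{cor:xu}. This makes one direction trivial: if some 3-braid representative has non-negative $\delta$-power (resp.\ $\Delta$-power) in Xu (resp.\ Garside) normal form, the representative itself is already a strongly quasipositive (resp.\ positive) 3-braid word, witnessing the claim. Links of braid index $1$ or $2$ are unknots or two-stranded torus links $T(2,m)$, for which explicit positive 3-braid representatives can be exhibited by stabilization when the link is braid positive, handling these cases separately.

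Thus the task reduces to: if $L$ has braid index exactly $3$ with 3-braid representative $\beta$ in Xu normal form $\delta^n\tau_1^{u_1}\ldots\tau_t^{u_t}$ and $L$ is strongly quasipositive (resp.\ braid positive), then $n\geq 0$ (resp.\ $\ell\geq 0$). For this, I would play off the slice-Bennequin inequality against an explicit genus formula for 3-braid closures read from the Xu (resp.\ Garside) normal form. On one hand, slice-Bennequin applied to $\beta$ gives $2g_4(L)\geq e(\beta)-2 = 2n+U-2$; on the other hand, strong quasipositivity implies $g(L)=g_4(L)$ (and by Stallings the same for braid positive links). For $n\geq 0$ the Bennequin surface of the Xu word realizes $g(L)=n+U/2-1$ for knots, matching the slice-Bennequin bound as an equality. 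When $n<0$, I would argue that the Seifert genus of the closure is strictly larger than $n+U/2-1$---either by a direct Seifert-algorithm computation on the expanded Xu word (which now contains $|n|$ copies of $\delta^{-1}=a^{-1}b^{-1}$), or by recognizing $L$ as the result of a negative full-twist operation on a strongly quasipositive 3-braid---so that $g(L)>g_4(L)$, contradicting strong quasipositivity. Part~(2) is handled in parallel using \cref{prop:garside} and the Garside normal form.

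The main obstacle is establishing the strict inequality $g(L)>n+U/2-1$ precisely in the case $n<0$, as Seifert's algorithm on the expanded Xu word only yields an upper bound. A clean resolution likely requires either a case analysis guided by the subcases (a), (b), (c) of \cref{lem:bkl} together with the signature formula of \cref{prop:signature} and the classical bound $|\sigma(L)|\leq 2g_4(L)$, or an invocation of HOMFLY-based Morton--Franks--Williams bounds to pin down $g(L)$ exactly from the Xu data $(n,t,U)$. Care is also needed to handle the exceptional non-conjugate pairs of 3-braids in \cref{thm:braidtolink}, although \cref{cor:xu} already absorbs most of this subtlety into the link-invariance of $n$.
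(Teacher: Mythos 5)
The paper does not prove this statement at all: it is imported verbatim from Stoimenow \cite[Theorems~1.1 and~1.3]{S} and used as a black box, so there is no internal proof to compare yours against. Judged on its own merits, your proposal contains a genuine logical gap at its central step. You apply the slice--Bennequin inequality to the $3$-braid $\beta$ to get $2g_4(L)\geq e(\beta)-2=2n+U-2$, and then claim that showing $g(L)>n+\frac{U}{2}-1$ when $n<0$ yields $g(L)>g_4(L)$ and hence a contradiction with strong quasipositivity. But slice--Bennequin is a \emph{lower} bound on $g_4$, and to conclude $g(L)>g_4(L)$ you need an \emph{upper} bound on $g_4(L)$ strictly below $g(L)$; no such bound is on the table. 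Concretely, for $n<0$ Xu's genus theorem gives $g(L)=-n+\frac{U}{2}-1$ (for knots), which indeed exceeds $n+\frac{U}{2}-1$, yet the pair of facts $g_4(L)\geq n+\frac{U}{2}-1$ and $g(L)=-n+\frac{U}{2}-1$ is perfectly consistent with $g_4(L)=g(L)$ for every $n\leq 0$. So no contradiction arises, and the argument does not rule out $n<0$.

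The fallbacks you suggest do not repair this: the signature bound $|\sigma(L)|\leq 2g_4(L)$ (or $\leq 2\gtop(L)$) is again a lower bound on the $4$-genus, hence points in the wrong direction, and the Morton--Franks--Williams inequality constrains braid index, not the Seifert genus in the way needed here. What is actually required is either an explicit construction showing a $4$-genus defect whenever $n<0$ (which is essentially the content of Stoimenow's proof and is not routine), or some invariant that bounds $g_4$ (or $\chi_4$) from above in terms of the Xu data. A minor additional point: the ``trivial direction'' you dispose of first (non-negative $\delta$- or $\Delta$-power implies the link is strongly quasipositive, resp.\ braid positive) is not part of \cref{thm:posstoimenow}; it is the easy half of \cref{prop:positivity}, which in the paper is proved \emph{using} \cref{thm:posstoimenow}, so care would be needed to avoid circularity if you wanted to invoke \cref{prop:positivity} or \cref{cor:xu} in a self-contained proof of the present statement.
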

We are now ready to prove our positivity characterizations. For braid positivity, we will once again need the Garside normal form introduced above.%
\begin{proposition}\label{prop:positivity}
Let $\delta^n \tau_1^{u_1} \tau_2^{u_2} \ldots \tau_t^{u_t}$ be in Xu normal form,
with closure a link $L$ of braid index 3. Then the following hold.
\begin{enumerate}
\item $L$ is a strongly quasipositive link if and only if $n \geq 0$.
\item $L$ is a braid positive link if and only if $n \geq t/2$ or $n = 0, t = 1$.
\end{enumerate}
\end{proposition}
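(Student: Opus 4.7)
The overall plan is to apply Stoimenow's \cref{thm:posstoimenow} in both parts, reducing each link-level positivity question to a statement about a 3-braid representative, and then to invoke the standard fact (cited in the paragraph before \cref{thm:posstoimenow}) that a 3-braid is strongly quasipositive, respectively positive, if and only if the power of $\delta$ in its BKL (hence Xu) normal form, respectively the power of $\Delta$ in its Garside normal form, is non-negative. Both of these powers are conjugacy invariants, and by \cref{cor:xu}(2) the Xu power $n$ is even a link invariant of links of braid index $3$. So the characterization will depend only on the quantities $n$, $t$, and the case of \cref{lem:bkl}.

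For part~(1), one direction is immediate: if $n \geq 0$, the Xu word $\delta^n \tau_1^{u_1} \cdots \tau_t^{u_t}$ is a positive word in the letters $a, b, x$ (using $\delta = ba$), hence a strongly quasipositive word in $B_3$, so $L$ is strongly quasipositive. For the converse, \cref{thm:posstoimenow}(1) would provide a strongly quasipositive 3-braid $\beta$ with closure $L$; the power of $\delta$ in its Xu normal form is then non-negative by the cited fact, and by \cref{cor:xu}(2) so is $n$ for any 3-braid with closure $L$.

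For part~(2), my plan is to argue analogously: $L$ is braid positive if and only if the Garside exponent $\ell$ of some (equivalently, any) 3-braid with closure $L$ is non-negative. To rewrite this in terms of Xu data, I would read $\ell$ off the seven rows of the table in \cref{lem:convert}. In case~(c), $\ell = (2n - t)/3$, so $\ell \geq 0 \iff n \geq t/2$. In case~(a), $t = 0$ and $\ell = 2\lfloor n/3 \rfloor$, so $\ell \geq 0 \iff n \geq 0 = t/2$. In case~(b), a short sub-case check ($\ell$ equals $2k$ for the first two sub-rows and $2k+1$ for the third) gives $\ell \geq 0 \iff n \geq 0$; since $n$ is an integer and $t = 1$, this is equivalent to $n \geq t/2$ or $(n, t) = (0, 1)$.

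The main---though mild---obstacle is this last bookkeeping step in case~(b). The exceptional pair $(n,t) = (0, 1)$ corresponds to Xu words $w \doteq a^{u_1}$ with $u_1 \geq 2$, whose closure is the split union of $T(2, u_1)$ and an unknot, a link of braid index $2 + 1 = 3$; the uniform bound ``$n \geq t/2$'' from case~(c) fails to capture these because positive words of the form $a^{u_1}$ land in Xu case~(b) rather than~(c), so the proportionality between Xu and Garside exponents is broken there and one must verify the exception by hand.
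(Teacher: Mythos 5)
Your proposal is correct and follows essentially the same route as the paper: both directions of each part are handled via Stoimenow's \cref{thm:posstoimenow}, the link-invariance of the Xu data from \cref{cor:xu}(2), and (for part~2) reading the Garside exponent off the table in \cref{lem:convert}, including the same exceptional row giving $(n,t)=(0,1)$. The only cosmetic difference is that you cite the standard fact that (strong quasi-)positivity of a braid is equivalent to non-negativity of the $\Delta$- (resp.\ $\delta$-) power of its normal form, whereas the paper rederives it inline by noting that the rewriting moves producing the normal form never create negative powers.
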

\begin{proof}
{\em Part (1).} 
If $n\geq 0$, then the Xu normal form yields a strongly quasipositive word for $L$, so $L$ is strongly quasipositive. For the other direction, assume $L$ is strongly quasipositive and a Xu normal form $w$ with closure $L$ is given. By \cref{thm:posstoimenow}, there is a strongly quasipositive word in $B_3$ representing~$L$. It may be transformed to its Xu normal form $w'$ just by replacing $\tau_{i+1}\tau_i \to \delta$ and $\tau_i \delta \leftrightarrow \delta \tau_{i+1}$, and by passing from $y\tau_i$ to $\tau_i y$ for $y$ a word in positive powers of $a,b,x,\delta$. None of these transformations create negative powers of~$\delta$, and so we find that the Xu normal form $w'$ has $n\geq 0$.
By \cref{cor:xu}(2), all Xu normal forms with closure $L$ have the same $n$, so $w$ has $n\geq 0$ as well, which was to be proven.

{\em Part (2).} If $n \geq t/2$, then one may check using \cref{lem:convert} that the Garside normal form 
of $\delta^n \tau_1^{u_1} \tau_2^{u_2} \ldots \tau_t^{u_t}$ starts with a non-negative power of~$\Delta$,
and thus yields a positive word with closure~$L$.
If $n = 0$ and $t = 1$, then the Xu normal form is $a^{u_1}$, which is already a positive word with closure~$L$. In both cases, it follows that $L$ is braid positive.
For the other direction, assume $L$ is braid positive and let a Xu normal form $w$ with closure $L$ be given. By \cref{thm:posstoimenow},
there is a positive word in $B_3$ representing~$L$.
Similarly as in the proof of part (1), one sees that the Garside normal form of this positive word starts with $\Delta^{\ell}$ with $\ell \geq 0$. By going through the rows of the table in \cref{lem:convert}, one sees that this implies that $n \geq t/2$, with the sole exception in the fourth row if $k = 0$ and $u_1\neq 0$: then, the Xu normal form is $a^{u_1}$, so $n = 0$ and $t = 1$.
Again using \cref{cor:xu}(2), it follows that $w$ also satisfies $n \geq t/2$, or $n = 0$ and $t = 1$.
\end{proof}
If a knot $K$ is the closure of a strongly quasipositive 3-braid in Xu normal form $\delta^n \tau_1^{u_1}\ldots \tau_t^{u_t}$, there is also a simple formula for the Seifert genus of $K$:
\begin{equation}\label{eq:genus}
g(K) = \frac{U}{2} + n - 1,
\end{equation}
where 
$U = u_1 + \cdots + u_t$. This follows from the Bennequin inequality~\cite{bennequin}. 

\section{Proofs of main theorems} 
\label{sec:proofs}
Before beginning with the proof of \cref{thm:1}, we describe a technique that we use to detect topological 4-genus defect in a given knot $K$, that is, to show $\gtop(K)<g(K)$. The main ingredient is the so-called {\em generalized crossing change}, also known as {\em null-homologous twist}, or simply {\em twist}. A null-homologous twist consists in performing a $\pm 1$ Dehn surgery on the boundary circle of an embedded disc $D\subset S^3$, such that $D$ intersects $K$ transversely in a finite number of interior points, with total algebraic intersection count~$0$. While $\pm 1$ Dehn surgery on an unknot in~$S^3$ gives back~$S^3$, the effect on~$K$ is that a (left- or right-handed) full twist is introduced into the strands of~$K$ that cross~$D$ (cf.~\cref{fig:twist}).
\begin{figure}[b]
\begin{center}
\includegraphics[scale=1.3]{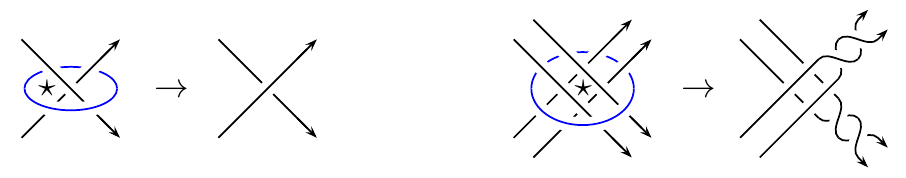}
\caption{Two examples of twists, at the locations marked by~$\star$.
Here, the boundary of the respective disc $D$ is drawn in blue; in subsequent figures, it will be omitted.}
\label{fig:twist}
\end{center}
\end{figure}
The \emph{untwisting number} $\tu(K)$ of $K$, introduced by Ince~\cite{I}, is defined as the minimal number of null-homologous twists needed to turn $K$ into the unknot. Clearly, $\tu(K)\leq u(K)$, since crossing changes are special cases of null-homologous twists. McCoy~\cite{McC} showed 
that $\gtop(K)\leq \tu(K)$, that is, the untwisting number of $K$ is an upper bound on the topological 4-genus of $K$. His result is based on Freedman's theorem, which states 
that knots of Alexander polynomial $1$ are topologically slice~\cite{F,FQ}. This bound can now be used to find topological 4-genus defect: If we find a way to turn a knot $K$ into the unknot with strictly less than $g(K)$ null-homologous twists, this will show $\gtop(K)<g(K)$. This method was already applied by Baader--Banfield--Lewark~\cite{BBL} to $3$-stranded torus knots. For the proof of \cref{thm:1} below, we use a slightly refined version of the method, as follows.

\begin{figure}[t]
\begin{center}
\includegraphics[scale=1.5]{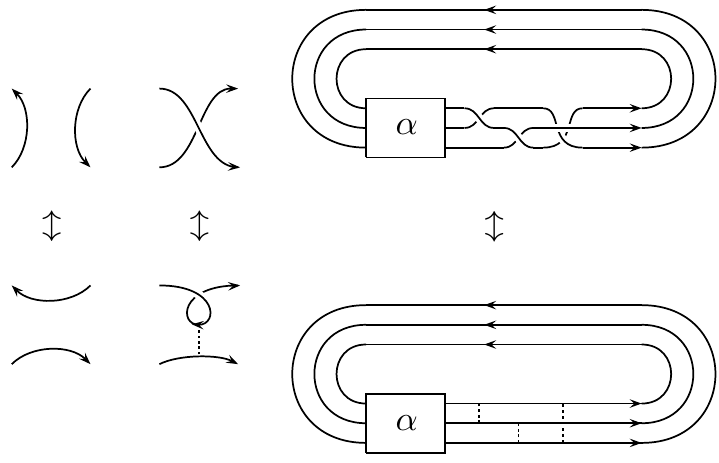}
\caption{Left: Saddle move. Middle: How to use isotopy and a saddle move to add or remove a braid crossing. Right: Example of a cobordism between $\cl(\alpha)$ and $\cl(\alpha\cdot abx)$ that consists of three saddle moves, for some $3$-braid $\alpha$.} \label{fig:saddle}
\end{center}
\end{figure}

Assume we find a cobordism $C\subset S^3\x[0,1]$ from a given knot~$K$ to some knot~$K'$
such that $g(C)=g(K)-g(K')$. If such a $C$ exists, we will write~$K\leadsto K'$.
Assume that furthermore $\tu(K')<g(K')$. Then, by McCoy's result, $\gtop(K')\leq \tu(K')<g(K')$. Composing the cobordism $C$ with a topological slice surface for $K'$, we obtain
\[
\gtop(K)\leq \gtop(K')+g(C)<g(K')+g(C)=g(K).
\]
In particular, for the topological 4-genus defect we find
\begin{equation}\label{eq:defect2}
g(K) - \gtop(K)\geq g(K') - \gtop(K') \geq g(K') - \tu(K').
\end{equation}

One way to construct cobordisms is to apply saddle moves to knot diagrams, as in \cref{fig:saddle}. Note that such cobordisms will always be smooth.
Suppose the knot $K$ is the closure of a strongly 
quasipositive 3-braid of the form $\beta=\delta^n \tau_1^{u_1}\cdots\tau_t^{u_t}$ with $n\geq 0$ and $u_1, \ldots, u_t \geq 1$.
Out of saddle moves, one may build a cobordism $C$
that
lowers the exponents $u_i$ and $n$ 
(one saddle move 
per letter $\tau_i$, two saddle moves 
per letter $\delta$), or transforms $\delta$ into $\tau_i$ (one saddle move).
Suppose the exponents remain non-negative, and $C$ is a cobordism from $K$ to another knot $K'$.
Then $K'$ is also strongly quasipositive,
and it follows from the Bennequin inequality, see \eqref{eq:genus},
that $g(C) = g(K) - g(K')$, i.e.~$K \leadsto K'$.

\begin{proof}[Proof of \cref{thm:1}]
We organize the proof into two parts, which consist in verifying the following statements.

\begin{enumerate}
\item $|\sigma(K)|=2g(K)$ for all $K$ in the list of \cref{thm:1} and their mirrors,
\item $\gtop(K)<g(K)$ for all other $3$-braid knots except the figure-eight.
\end{enumerate}

In light of Kauffman and Taylor's signature bound $|\sigma(J)|\leq 2\gtop(J)$, valid for all knots $J$, see~\cite{KT,P}, these two statements together imply the theorem.\\[-0.7em]

{\em Part (1).} The genera and signatures of torus knots are well understood~\cite{Lit}; in particular, we know that the torus knots $K$ with $|\sigma(K)|=2g(K)$ are precisely the knots $T(3,4), T(3,5)$, $T(2,2n+1)$, $n\geq 0$, and their mirrors.
In fact, the signature of $T(2,2n+1)$, $n\geq 0$, is known to be $-2n$. Both Seifert genus and signature are additive under connected sum of knots; hence $|\sigma(K)|=2g(K)$ for all knots $K$ of the first type listed. If $K$ is one of the listed pretzel knots $K=P(2p,2q+1,2r+1,1)$ with $p\geq 1, q,r\geq 0$, then $K$ has a positive and alternating, hence special alternating diagram; see \cref{fig:pretzel}. Murasugi shows in this case that $|\sigma(K)|=2g(K)$; see~\cite[Corollary~10.3]{M1}.\\[-0.7em]

{\em Part (2).} 
Let $K$ be a $3$-braid knot other than the figure-eight knot such that neither $K$ nor its mirror appears in the list of \cref{thm:1}. The goal is to show that $\gtop(K)<g(K)$. We distinguish several cases.
\begin{itemize}[leftmargin=0pt,itemindent=3.7em]
\item If $K$ is the closure of a positive $3$-braid, this is a special case of the analogous statement about all positive braids, on any number of strands, by Liechti \cite[Theorem~1, Corollary~2]{L}.

\item Next, we consider the case in which $K$ is strongly quasipositive without being braid positive; this is the main part of the proof. By \Cref{lem:xuform} and \cref{prop:positivity},
$K$ is the closure of a $3$-braid $\beta$ in Xu normal form $\beta=\delta^n \tau_1^{u_1}\tau_2^{u_2}\cdots \tau_t^{u_t}$ with $u_1,\ldots,u_t\geq 1$, $n\geq 0$, $t\geq 2$, $n+t\equiv 0\mod 3$, and $2n<t$ (note that the case $n = 0, t = 1$ is excluded since we assume that $\cl(\beta)$ is a knot). These conditions on $n$ and $t$ leave the following possibilities: $(n,t)=(0,3)$, or $(n,t)=(1,5)$, or $t\geq 6$. First, if $(n,t)=(0,3)$, then $K=P(u_1,u_2,u_3,1)$; see \cref{fig:pretzel}. If more than one of $u_1,u_2,u_3$ is even, $K$ is a link of more than one component; the same is true if all three parameters are odd. We may therefore assume that $u_1=2p$ is even and $u_2=2q+1$, $u_3=2r+1$ are odd, with $p\geq 1, q,r\geq 0$, in which case $K$ is a pretzel knot from the list, which we excluded.
\begin{figure}[b]
\begin{center}
\includegraphics[scale=1.2]{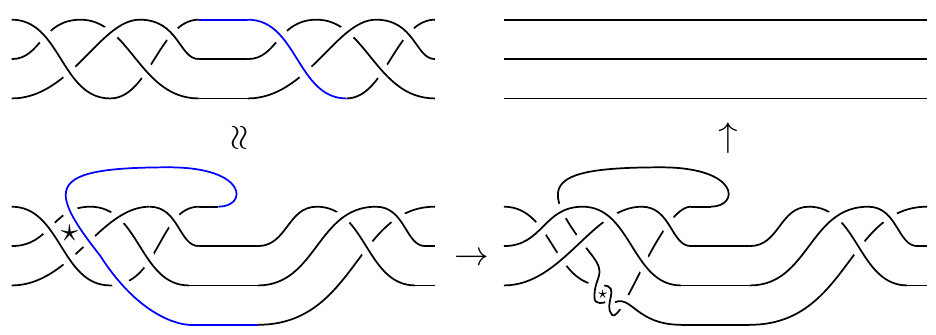}
\caption{$abxabx\twist \varnothing$ using one twist on four strands, followed by another twist on two strands, at the locations marked~$\star$ (cf.~\cref{fig:twist}). The first step is an isotopy, moving the blue strand. The last step is a crossing change near~$\star$, followed by an isotopy fixing the endpoints of the braid strands.} \label{fig:abxabx}
\end{center}
\end{figure}

Secondly, if $(n,t)=(1,5)$, we have $\beta=\delta a^{u_1}b^{u_2}x^{u_3}a^{u_4}b^{u_5}$. If the exponents $u_i$ are all odd, $\beta$ closes to a two component link, a contradiction to $K$ being a knot. Therefore at least one of the $u_i$ is even. We may assume that $u_1$ is even, because the exponents $u_1,u_2,\ldots,u_t$ may be cyclically permuted without changing the braid closure, as explained in \cref{sec:Xu} after \cref{lem:xuform}. In particular, we may assume that $u_1\geq 2$. Then, since $\delta=xb$,
\[ \beta \ \sim \ a^{u_1} b^{u_2} x^{u_3} a^{u_4} b^{u_5} x b \ \leadsto \ a^2 b x a b x b \ =\ a (abx)^2 b \ \twist \ ab, \]
where `$\sim$' denotes conjugation, `$\leadsto$' denotes the existence of a cobordism whose genus equals the difference of the Seifert genera of the knots it connects and `$\twist$' is shown in \cref{fig:abxabx}. Here, the cobordism `$\leadsto$' is built from saddles decreasing the exponents~$u_i$. Since the closure of $a(abx)^2 b$ has Seifert genus $3$ by \eqref{eq:genus}, while only $2$ twists are used in `$\twist$', we obtain $\gtop(K)\leq g(K)-1<g(K)$.

Finally, if $t\geq 6$, we proceed similarly as in the previous case. Again, the parity of the exponents $u_1, u_2, \dots, u_t$ determines whether $\beta$ closes to a knot or a multi-component link. Indeed, a brief case study shows that the closure $K$ of $\beta$ is a link of two or three components whenever all the $u_i$ are odd. We may therefore assume via conjugation that $u_1$ is even. If $t > 6$ or $n > 0$, then
\[ \beta \ \leadsto \ a^{u_1} b^{u_2} x^{u_3} a^{u_4} b^{u_5} x^{u_6} b \ \leadsto \ a (abx)^2 b \ \twist \ ab. \]
If $t = 6$ and $n = 0$, then $\beta=a^{u_1}b^{u_2}x^{u_3}a^{u_4}b^{u_5}x^{u_6}$. 
In this case, in order for $K$ to be a knot,
at least two of the exponents $u_1, \dots, u_6$, say $u_i$ and $u_j$, need to be even. We may assume that $(i,j)=(1,2)$ or $(i,j)=(1,4)$. To see this, use cyclic permutation (as in the case $(n,t)=(1,5)$) and the fact that $u_1,u_3$ cannot be the only even exponents, again because $\beta$ would not close to a knot if they were. Therefore, smooth cobordisms bring us to 
$a^2 b^2 x a b x$ or to $a^2 b x a^2 b x$, whose closures have Seifert genus $3$ by \eqref{eq:genus}. In the first case,
\[ a^2 b^2 x a b x\  =\ a^2 b \delta^{-1} \delta b x a b x \ =\ a^2 b \delta^{-1} b (abx)^2 \ \twist \ a^2 b \delta^{-1} b \ =\ a^2 b a^{-1}\ \sim \ ab. \]
For the second case, \cref{fig:abxaabx} shows how to turn $a(abxa^2bx)$ into $a\gamma$ using two twists. Here, $\gamma$ is the tangle shown in the top right corner of the figure. Note that $a\gamma$ describes the unknot when closed like a braid. Since the closure of $a^2bxa^2bx$ has Seifert genus $3$ (see \eqref{eq:genus}), we obtain $\gtop(K)\leq g(K)-1<g(K)$. This concludes the case that $K$ is strongly quasipositive.

\begin{figure}[b]
\begin{center}
\includegraphics[scale=1.2]{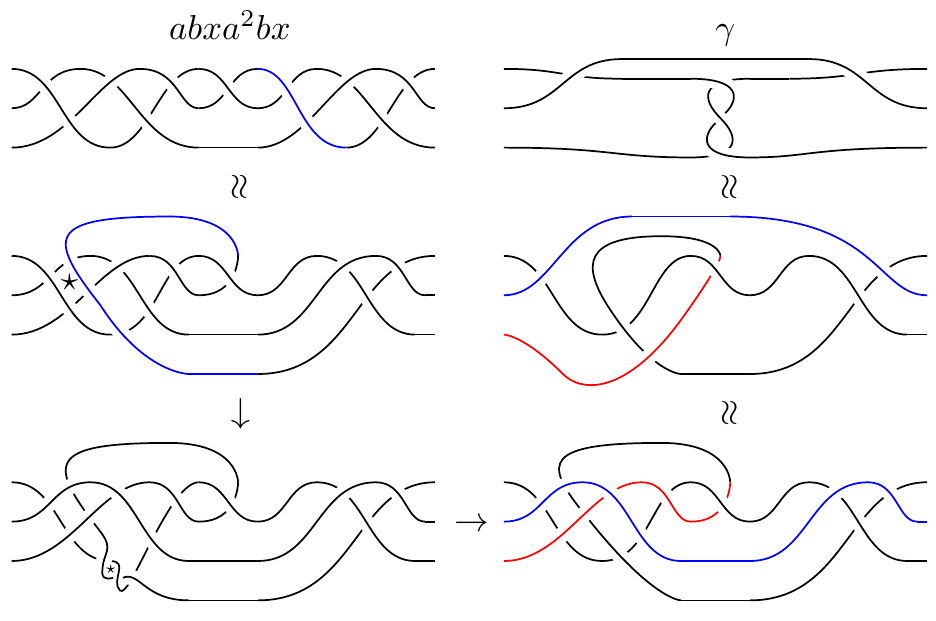}
\caption{How to turn the braid $abxa^2bx$ (top left) into the tangle $\gamma$ (top right) using one twist on four strands, followed by another twist on two strands, at the locations marked~$\star$ (cf.~\cref{fig:twist}).} \label{fig:abxaabx}
\end{center}
\end{figure}

\item If the mirror of $K$ is strongly quasipositive, we apply the above argument to the mirror of $K$; since both $\gtop$ and $g$ are invariant under taking mirror images, we  obtain $\gtop(K)<g(K)$ again.

\item If $K$ or its mirror is the knot $T(2,2m+1)\#T(2,-2n-1)$, with $m\geq n\geq 1$, it has a Seifert surface $S$ of genus $m+n=g(K)$ that contains a copy of the ribbon knot $R\coloneqq T(2,2n+1)\#T(2,-2n-1)$, bounding a subsurface of~$S$ of genus $g(R)=2n$. A surgery that cuts this subsurface off $S$ and replaces it with a slice disk for $R$ gives rise to a smooth surface of genus $m+n-2n=m-n$ embedded in the four-ball, with boundary~$K$. This shows that $g_4(K) \leq m-n \leq m+n-2 < g(K)$, because $n\geq 1$ by assumption. Here, we use the standard notation $g_4(K)$ for the smooth $4$-genus. In particular, we obtain $\gtop(K)<g(K)$.

\item What remains are the $3$-braid knots $K$ such that neither $K$ nor its mirror is among the following: strongly quasipositive, a connected sum of the form $T(2,2m+1)\#T(2,-2n-1)$, $m\geq n\geq 1$, or the figure-eight knot. For such $K$, Lee--Lee~\cite{LL} prove the following bound on the unknotting number: $u(K)<g(K)$. Since $\gtop(J)\leq g_4(J)\leq u(J)$ holds for all knots $J$, this implies $\gtop(K)<g(K)$ and completes the proof.\qedhere
\end{itemize}
\end{proof}

For comparison, we note the following analog of \cref{thm:1}, in which the smooth $4$-genus $g_4(K)$ replaces $\gtop(K)$, and Rasmussen's invariant $s(K)$
from Khovanov homology~\cite{Ras}
(or any other slice-torus invariant~\cite{Liv}, e.g.~the Heegaard--Floer $\tau$-invariant)
plays the role of the signature $\sigma(K)$.

\begin{proposition}
Let $K$ be a $3$-braid knot other than the figure-eight knot. Then
\[ |s(K)| = 2g(K) \quad \Longleftrightarrow\quad g_4(K) = g(K). \]
These equalities hold precisely when $K$ or its mirror is strongly quasipositive.
\end{proposition}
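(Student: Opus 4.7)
The plan is to reduce both equivalences to two items, each of which follows from results already available. First, if $K$ or its mirror is strongly quasipositive, then both $|s(K)|=2g(K)$ and $g_4(K)=g(K)$ hold. Second, if $K$ is a 3-braid knot different from $4_1$ and neither $K$ nor $\overline K$ is strongly quasipositive, then $g_4(K)<g(K)$. Combined with the slice-torus inequality $|s(K)|\leq 2g_4(K)\leq 2g(K)$, these two items close both equivalences in the proposition.

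For the first item, I would invoke classical results: a Bennequin surface realises the smooth 4-genus on any strongly quasipositive knot (Rudolph's slice-Bennequin inequality), giving $g_4(K)=g(K)$; and any slice-torus invariant equals the smooth 4-genus on quasipositive knots \cite{Liv}, giving $s(K)=2g(K)$. Mirroring handles the case when $\overline K$ is SQP. The direction $|s(K)|=2g(K)\Rightarrow g_4(K)=g(K)$ of the first equivalence is then immediate from the slice-torus inequality.

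The substantive work is the second item, which I would address by recycling the case analysis from Part~(2) of the proof of \cref{thm:1}. The cases in that proof where $K$ or $\overline K$ is SQP need not be re-examined here (they land in the SQP column of the proposition), so only two classes of knots remain. If $K$ or $\overline K$ is a connected sum $T(2,2m+1)\#T(2,-2n-1)$ with $m\geq n\geq 1$, the construction in the proof of \cref{thm:1} already produces a smooth genus-$(m-n)$ surface in $B^4$ bounded by $K$, yielding $g_4(K)\leq m-n<m+n=g(K)$. For every other 3-braid knot with neither $K$ nor $\overline K$ SQP, the Lee-Lee bound $u(K)<g(K)$ from \cite{LL} combined with $g_4(K)\leq u(K)$ yields $g_4(K)<g(K)$. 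The main obstacle in \cref{thm:1}, namely the gap between SQP and braid positivity that necessitated McCoy's twisting machinery, disappears here because SQP knots automatically satisfy $g_4=g$; consequently no new argument beyond the two items above is needed.
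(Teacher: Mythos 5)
Your proposal is correct and follows essentially the same route as the paper: strong quasipositivity gives both equalities (Rudolph plus the slice-torus property; the paper cites Shumakovitch), the slice-torus inequality $|s(K)|\leq 2g_4(K)\leq 2g(K)$ closes one direction, and the non-SQP case is settled by Lee--Lee together with the surgery argument for $T(2,2m+1)\#T(2,-2n-1)$. The only cosmetic difference is that the paper argues in the forward direction (from $g_4(K)=g(K)$ it deduces $u(K)=g(K)$ and then applies Lee--Lee's classification), whereas you package the same Lee--Lee input contrapositively as in Part~(2) of the proof of \cref{thm:1}.
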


\begin{proof}
If $K$ or its mirror is strongly quasipositive, then $|s(K)|=2g(K)$ 
follows, see~\cite[Proposition~1.7]{Shu}. 
Moreover, the implication $|s(K)| = 2g(K) \Rightarrow g_4(K) = g(K)$ holds for all knots $K$,
because of the inequalities $|s(K)| \leq 2g_4(K) \leq 2g(K)$.
It remains to prove that if $K$ is a 3-braid knot other than the figure-eight knot with $g_4(K) = g(K)$, then $K$ or its mirror is strongly quasipositive. This follows from Lee--Lee's results~\cite{LL}. More precisely, for a 3-braid knot~$K$ with $g_4(K) = g(K)$ Theorem~1.1 in \cite{LL} implies $u(K)=g(K)$. By Theorem~1.3 of the same paper, $K$ or its mirror is either strongly quasipositive or a connected sum of two-strand torus knots $T(2,2m+1)\#T(2,-2n-1)$ with $m\geq n\geq 1$. The latter can be excluded as in the proof of our \cref{thm:1} by
showing $g_4(K)\leq m-n \leq m+n-2<g(K)$, a contradiction to $g_4(K)=g(K)$.
\end{proof}

\begin{theorem} \label{thm:2}
Let $K$ be a strongly quasipositive $3$-braid knot, written as the closure of a $3$-braid $\beta$ in Xu normal form $\beta=\delta^n \tau_1^{u_1}\tau_2^{u_2}\cdots \tau_t^{u_t}$, with $u_1,\ldots, u_t\geq 1$ and $n\geq 0$. Then the topological 4-genus defect of $K$ is bounded as follows: 
\[ \frac{n}{3}+\frac{t}{3}-1\geq g(K)-\gtop(K) \geq \frac{n}{3}+\frac{t}{6}-3. \]
The constants $\frac13$ and $\frac16$ in the second inequality are optimal in the following sense:
Whenever $C>\frac13$ or $D>\frac16$, and $E\in\R$, there exists a $3$-braid in Xu normal form as above with $n\geq 0$ such that its braid closure $K$ satisfies
\[ g(K)-\gtop(K) < Cn+Dt-E. \]
\end{theorem}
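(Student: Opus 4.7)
I would combine the Kauffman--Taylor inequality $|\sigma(K)|\leq 2\gtop(K)$ with the signature formula in \cref{prop:signature} and the Bennequin genus formula \eqref{eq:genus}. For $t\geq 1$ (cases (b) and (c) of \cref{lem:xuform}), strong quasipositivity together with $U = u_1 + \cdots + u_t \geq t$ forces $\sigma(K) = -U - \tfrac{4n}{3} + \tfrac{2t}{3}\leq 0$, so $|\sigma(K)| = U + \tfrac{4n}{3} - \tfrac{2t}{3}$. A direct calculation then gives $2g(K) - |\sigma(K)| = \tfrac{2n}{3} + \tfrac{2t}{3} - 2$, and halving yields the upper bound. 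In the remaining case $t=0$, $K$ is the torus knot $T(3,n)$ and the signature bound alone is not strong enough; here the sharper lower bound on $\gtop(T(3,n))$ from \cite{BBL} supplies what is needed.

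\textbf{Lower bound.} My plan is to construct a smooth cobordism $C$ from $K$ to a strongly quasipositive knot $K'$ built from saddle moves, and then to bound $\tu(K')$ explicitly by null-homologous twists. By McCoy's bound $\gtop(K') \leq \tu(K')$ and the Bennequin equality $g(C) = g(K) - g(K')$, this gives $g(K) - \gtop(K) \geq g(K') - \tu(K')$. Concretely, I would first spend $U - t$ saddles reducing all exponents $u_i$ to $1$ (with small modifications to keep the intermediate closures knotted), arriving at $K'$ with $g(K') = \tfrac{t}{2} + n - 1$. I would then partition the resulting Xu word $\delta^n\tau_1\tau_2\cdots\tau_t$ into blocks of the form $(abx)^2$, together with analogous blocks involving $\delta$, untwisting each block by two null-homologous twists in the style of \cref{fig:abxabx} and \cref{fig:abxaabx}. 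A careful count yields $\tu(K') \leq \tfrac{2n}{3} + \tfrac{t}{3} + O(1)$, and hence $g(K') - \tu(K') \geq \tfrac{n}{3} + \tfrac{t}{6} - O(1)$, which is the desired bound once the $O(1)$ is absorbed into the additive constant $-3$.

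\textbf{Main obstacle.} The principal difficulty is parity and divisibility bookkeeping. The intermediate braids produced by the saddles must close to knots so that Bennequin's equality and McCoy's bound apply, which forces a case analysis modulo $3$ in both $n$ and $t$, in the spirit of the proof of \cref{thm:1}. Boundary terms from incomplete blocks, when $t$ is not a multiple of $6$ or $n$ is not a multiple of $3$, are what produce the additive constant $-3$.

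\textbf{Optimality.} For $C > \tfrac{1}{3}$, the family of torus knots $T(3,n)$ as $n\to\infty$ satisfies $g - \gtop \leq \tfrac{n}{3} - 1$ by the upper bound, so $g - \gtop < Cn - E$ (hence $< Cn + Dt - E$) eventually, for any $D$ and $E$. For $D > \tfrac{1}{6}$, I would exhibit a family of strongly quasipositive 3-braid knots with $t\to\infty$ whose defect is asymptotically $\tfrac{t}{6}$, built by iterating blocks like $a^2b^2xabx$ from the proof of \cref{thm:1} so that the twisting construction of the lower bound is asymptotically tight, making $Dt - E$ eventually exceed the defect.
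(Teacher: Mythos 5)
Your upper bound is exactly the paper's argument (Kauffman--Taylor applied to \cref{prop:signature} and \eqref{eq:genus}, with $t=0$ delegated to \cite{BBL}), and your lower bound follows the paper's route as well: a saddle cobordism to a standardized closure, $\tfrac{t}{6}+O(1)$ applications of $(abx)^2\twist\varnothing$ costing two twists each, and $\tu(T(3,m))\leq\tfrac23 m+\tfrac13$ from \cite{BBL} for the residual $\delta$-power. Two details you should not leave to ``bookkeeping'': reducing all $u_i$ to $1$ gives $\delta^n(abx)^{t/3}$ only after truncating $t$ to a multiple of $6$ and adjusting $n$ to $m\equiv 1\pmod 3$ so that the target is a knot; and when $n=0$ the adjustment $m=n-2$ is negative, which destroys strong quasipositivity and hence the Bennequin equality $g(C)=g(K)-g(K')$ --- the paper instead passes to $m=1$ at the cost of an extra $+1$ in the defect estimate. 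These are exactly the sources of the constant $-3$, and your plan accommodates them.

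The genuine gap is in the optimality of $D=\tfrac16$. To show $g(K)-\gtop(K)<Dt-E$ for a family with $n$ bounded and $t\to\infty$, you need an upper bound on the defect, i.e.\ a \emph{lower} bound on $\gtop$ of size roughly $g-\tfrac{t}{6}$. Your twisting construction only shows the defect is \emph{at least} about $\tfrac{t}{6}$; asserting it is ``asymptotically tight'' assumes the matching lower bound on $\gtop$ without providing one. The classical signature cannot supply it: for the paper's family, the closures of $(abx)^{2k}abx^2abx^2$ (with $n=0$, $t=6k+6$, $g=3k+3$), one has $|\sigma(K)|=2k+4$, so $\tfrac12|\sigma(K)|$ is roughly half of the needed $2k+\tfrac53$, and the first inequality of the theorem only gives defect $\leq\tfrac{t}{3}-1$, which does not beat $Dt$ for $\tfrac16<D<\tfrac13$. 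The missing ingredient is the Levine--Tristram signature at $\omega=e^{2\pi i/3}$: by Gambaudo--Ghys \cite[Corollary~4.4]{GG}, for a strongly quasipositive $3$-braid closure $J$ the signature function on $(0,\tfrac13)$ has slope $-2\cdot\mathrm{writhe}$ up to error $2$, giving $\hatsigma(J)\geq\tfrac43(g(J)+1)-2$ and hence $\gtop(K)\geq 2k+\tfrac53$, so the defect is at most $k+\tfrac43\approx\tfrac{t}{6}$. Without this (or an equivalent lower bound on $\gtop$), your $D$-optimality argument does not close. Your $C>\tfrac13$ argument via $T(3,3k+1)$ is fine and matches the paper.
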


The case $t=0$ in \Cref{thm:2}, in which $K=T(3,n)$, is covered by \cite[Theorem~1]{BBL}. The equality
\[
g(K)-\gtop(K) = n-1 -\left\lceil \frac{2n}{3}
\right\rceil
\]
directly implies the claimed upper and lower bounds in this case.

\begin{proof}
We only consider the case $t>0$ and begin with the upper bound $\frac{n}{3}+\frac{t}{3}-1\geq g(K)-\gtop(K)$. 
We first use \cref{prop:signature} to compute the absolute value of the signature $|\sigma(K)| = -\sigma(K) = U + \frac43n - \frac23t$, where $U=u_1+\ldots+u_t$, and recall that $g(K)=\frac{U}{2} + n-1$ from \eqref{eq:genus}, \cref{sec:Xu}. The bound then follows directly from Kauffman and Taylor's classical bound $|\sigma(K)|\leq 2\gtop(K)$.

To establish the lower bound, we first apply a smooth cobordism from $K$ to a knot $K'$, by suitably lowering the exponents $n,u_1,u_2,\ldots,u_t$ in $\beta$, as explained in \cref{fig:saddle} above.
First, assume $n > 0$.
We set $K'\coloneqq \cl(\delta^m(abx)^{\frac{s}{3}})$, where
\[ s=6\left\lfloor\frac{t}{6}\right\rfloor\quad\text{and}\quad m=\left\{\begin{array}{ll}n-2 & \text{if } n\equiv 0\mod 3\\ n-1 & \text{if }n\equiv 2\mod 3\\ n & \text{if }n\equiv 1\mod 3.\end{array}\right. \]
We have $K\leadsto K'$, and so this is a smooth cobordism which does not increase the topological 4-genus defect. In other words,
\[ g(K)-\gtop(K)\geq g(K')-\gtop(K'), \]
as in \eqref{eq:defect2}.
Next, we apply the untwisting move $(abx)^2\twist\varnothing$ from \cref{fig:abxabx} exactly $\frac{s}{6}$ times to the knot $K'$, resulting in $\cl(\delta^m)=T(3,m)$. Since $m\equiv 1\mod 3$, this is a knot again. For $m\geq 4$, \cite[Lemma~5~(1)]{BBL} yields $\tu(T(3,m))\leq \frac23 m+\frac13$.
This inequality still holds for $m = 1$
and adds up to
\[ \gtop(K')\leq \tu(K')\leq 2\cdot\frac{s}{6}+\frac23m+\frac13. \]
Since $g(K')=\frac{s}{2}+m-1$ by \eqref{eq:genus}, and since $\frac{s}{6}\geq \frac{t}{6}-\frac56$ and $m\geq n-2$, we obtain
\begin{eqnarray}
g(K)-\gtop(K) 
\geq \frac{s}{6}+\frac{m}{3}-\frac43 \nonumber 
     \geq\frac{t}{6}+\frac{n}{3}-3. \nonumber
\end{eqnarray}
In the case $n = 0$, the above procedure fails because $m$, as defined above, is negative and the smooth cobordism to $K'$ might therefore increase the 4-genus defect. However, a simple cosmetic modification allows for a cobordism that only increases the 4-genus defect by at most one. Specifically, we set $m=1$ (instead of $m=-2$) and $K'=\cl(\delta(abx)^{\frac{s}{3}})$. The cobordism from $K$ to~$K'$ is then given by lowering the exponents $u_1,u_2,\ldots,u_t$ in $\beta$ as above while increasing the exponent of $\delta$ from $0$ to $1$. This gives
\[ g(K)-\gtop(K)\geq g(K')-\gtop(K')-1. \]
Now we apply the $\frac{s}{6}$ untwisting moves $(abx)^2\twist\varnothing$ as above. The result is the unknot $\cl(\delta)$. Since $n=0$, we again obtain the claimed bound
\[ g(K)-\gtop(K) \geq \frac{s}{6}-1\geq \frac{t}{6}-\frac56 -1\geq\frac{t}{6}+\frac{n}{3}-3. \]

In order to demonstrate optimality of the constants, we consider the two special families of $3$-braids $\delta^n$ and $(abx)^n$, which we slightly modify to $\delta^{3k+1}$ and $(abx)^{2k}abx^2abx^2$, to make sure that their braid closures are connected.

\begin{itemize}[leftmargin=0pt,itemindent=3.7em]
\item For $K=T(3,3k+1)$ with $k\geq 1$, the closure of the braid $\delta^{3k+1}$ 
in Xu normal form with $n=3k+1$ and $t=0$, we have $g(K)=3k$ and $\gtop(K)=2k+1$ by~\cite{BBL}, hence $g(K)-\gtop(K)=k-1$. Whenever $C>\frac13$ and $D,E$ are arbitrary constants, we will therefore have
\[ g(K)-\gtop(K)=k-1<C\cdot (3k+1)+D \cdot 0-E \]
for sufficiently large $k$.

\item If $K$ is the closure of $(abx)^{2k}abx^2abx^2$, which is in Xu normal form with $n=0$ and $t=6k+6$, then $g(K)=3k+3$.
We will now invoke the Levine--Tristram signature function \cite{Lev,Tri}, which associates to each knot $J$ a function $[0,1]\to\Z$, $t\mapsto \sigma_{e^{2\pi i t}}(J)$ with $\sigma_{1}(J)=0$ for $t=0$.
By Gambaudo--Ghys~\cite[Corollary~4.4]{GG},
for any $3$-braid~$\beta$ with closure $J$, the Levine--Tristram signature function of $J$ 
grows linearly on $(0,\frac13)$ with slope $-2$ times the writhe of $\beta$, up to a pointwise error of at most $2$ (see e.g.~\cref{fig:sigprofile} at the end of the paper). For strongly quasipositive $\beta$ with $J$ a knot, that slope is $-4(g(J)+1)$, see
\eqref{eq:genus}, and hence
\begin{equation}\label{eqn:sigmahat}
\widehat{\sigma}(J)\coloneqq \max_{\omega\in S^1\setminus\Delta_J^{-1}(0)} |\sigma_\omega(J)|\geq \frac43(g(J)+1)-2.
\end{equation}
Since $\frac12|\sigma_\omega(J)|\leq \gtop(J)$ if $\omega\in S^1$ is not a root of the Alexander polynomial of~$J$ (see~\cite{KT,P}), we obtain $\frac23(g(K)+1)-1=2k+\frac53\leq \gtop(K)$. Hence, if $C,E$ are arbitrary constants and $D>\frac16$, we have
\[ g(K)-\gtop(K)\leq 3k+3-2k-\frac53=k+\frac43 < C\cdot 0 + D\cdot (6k+6) - E, \]
for sufficiently large $k$. In this case, it does not suffice to consider the (classical) signature bound on $\gtop(K)$. Indeed, $|\sigma(K)|=2k+4$ (see~\cref{prop:signature}) is roughly half the Levine--Tristram signature $\sigma_{e^{2\pi i/3}}(K)$. Substituting $\sigma(K)$ for $\sigma_{e^{2\pi i/3}}(K)$ in the above argument would therefore not work.
\qedhere
\end{itemize}

\begin{remark}\label{rmk:abxexact}
The proof of \Cref{thm:2} shows that for $K$ the closure of $(abx)^{2k}abx^2abx^2$, $k \geq 0$, we have 
\[
2k+\frac53\leq \frac12|\sigma_{e^{2\pi i/3}}(K)| \leq \frac12\widehat{\sigma}(K)  \leq\gtop(K).
\]
On the other hand, from K 
we obtain the closure of $abx^2abx^2 \sim a^2bxa^2bx$ using $2k$ twists (using $k$ times our trick $abxabx\twist \varnothing$ from \Cref{fig:abxabx}), which can be untwisted with two twists (compare the case $t=6, n=0$ in the proof of \Cref{thm:1}). Thus $\gtop(K)\leq \tu(K) \leq 2k+2$ and we obtain
\[
\frac12|\sigma_{e^{2\pi i/3}}(K)| = \frac12\widehat{\sigma}(K)  =\gtop(K) = \tu(K) = 2k+2.
\]
\end{remark}
\end{proof}

\section{The 4-genus of closures of positive 3-braids} 
Our results so far provide infinitely many strongly quasipositive 3-braid knots $K$
satisfying $\frac12 \hatsigma(K) = \gtop(K)$, see \cref{theorem1} and \cref{rmk:abxexact}.
This leads us to the following.
\begin{question}\label{q:equality}
Does the equality $\frac12 \hatsigma(K) = \gtop(K)$ hold for all strongly quasipositive 3-braid knots?
If not, does it at least hold for all braid positive 3-braid knots?
\end{question}
In this section, we exhibit large families of positive 3-braid knots
for which $\frac12 \hatsigma(K) = \gtop(K)$ holds up to an error of $1$,
see \Cref{prop:braidposbound}.
In certain cases, we can even determine $\gtop$ exactly (see \Cref{ex:exactex1,ex:exactex2} and \Cref{rem:exactfamilies}). 
For the knots covered by \Cref{prop:braidposbound}, $\widehat{\sigma}$ equals $|\sigma|$ or $|\sigma|+2$,
whereas in the example discussed in \cref{rmk:abxexact}, $\widehat{\sigma}$ equals $|\sigma_{\zeta3}|$.
However, as we will see in \cref{rem:sign}, the differences
$\widehat{\sigma} - |\sigma|$ and
$\widehat{\sigma} - |\sigma_{\zeta3}|$ can be arbitrarily large for positive 3-braid knots.
This fact contributes to the difficulty of \cref{q:equality}.

Throughout, we will use the Xu normal form of $3$-braids from \Cref{sec:Xu}. 
Recall that we write $\delta = ba = ax = xb$ such that $a\delta = \delta b$, $b \delta = \delta x$ and $x \delta = \delta a$ (see \eqref{eq:xurules} in \Cref{sec:Xu}).

\begin{example}\label{ex:exactex1}
Let $K$ be a knot that is the closure of a $3$-braid in Xu normal form $\beta = \delta^{3\ell +2} a^{u_1}$ for $\ell \geq 0, u_1 \geq 1$. Note that $u_1$ must be even for $K$ to be a knot. We claim that
\begin{align}\label{eq:g4topex1}
\gtop(K) = \tu(K) =\frac{|\sigma(K)|}{2}=  \frac{u_1}{2} +2\ell +1.
\end{align}
The last equality follows directly from \Cref{prop:signature}.
To prove \Cref{eq:g4topex1}, using $\frac12 |\sigma(K)| \leq \gtop(K) \leq \tu(K)$ (as explained in the beginning of \cref{sec:proofs}) 
it is enough to 
show that $\tu(K) \leq \frac12 |\sigma(K)|$. 
By $\frac{u_1-2}{2}$ crossing changes from positive crossings of $\beta$ to negative crossings we obtain the braid $\delta^{3\ell +2} a^{2}$.
We will prove by induction that this braid can be untwisted with $2\ell +2$ twists, which implies $\tu(K) \leq \frac{u_1}{2}+2\ell+1$. For $\ell = 0$, we have $\delta^2 a^2 = baba^3 = aba^4$ which becomes $ab$ (with closure the unknot) by two crossing changes. For $\ell = 1$, we have 
\begin{align*}
\delta^5 a^2 &= \delta^4 ax a^2 
= \delta^3 x^2 bxa^2 
= \delta^2 b^3 abxa^2 
= \delta a^4 xabxa^2 
= x^5 bxabxa^2\\
&\sim b^5 abxabx x \twist b^5 x,
\end{align*}
which turns into $bx\sim \delta$ using two crossing changes. Recall that the two twists needed to untwist $abxabx \twist \varnothing$ are shown in \cref{fig:abxabx} of \Cref{sec:proofs}.
Now, for $\ell \geq 2$, we have
\begin{align*}
\delta^{3\ell+2}a^2 &= \delta^{3\ell-3}x^5 bxabxa^2\sim \delta^{3\ell-3}b^5 abxabx x \twist \delta^{3\ell-3}b^5 x \\&\sim \delta^{3\ell-3} \delta b^4
= \delta^{3\ell-3} a \delta b^3 \sim \delta^{3\ell-1} b^2 \sim 
\delta^{3(\ell-1)+2} a^2,
\end{align*}
where we again used two twists for `$\twist$'. Inductively this shows that $\delta^{3\ell+2}a^2$ can be untwisted with $2\ell+2$ twists as claimed. 
\end{example}

\Cref{ex:exactex1} combined with the results from \cite{BBL} for $3$-strand torus knots shows that
the equalities $\gtop = \tu =\frac{\hatsigma}{2}$ hold for all strongly quasipositive $3$-braid knots in Xu normal form (a) or (b) from \Cref{lem:xuform}, where $\hatsigma = \left|\sigma \right|$ except for certain torus knots of braid index $3$; see also \Cref{rem:sign}. We next consider a sub-case of case (c) from \Cref{lem:xuform}. 

\begin{example}\label{ex:exactex2}
Let $K$ be a knot that is the closure of a $3$-braid in Xu normal form $\beta = \delta^{3\ell +1} a^{u_1}b^{u_2}$ for $\ell \geq 0, u_1, u_2 \geq 1$. Note that $u_1$ and $u_2$ must both be even for $K$ to be a knot. We claim that
\begin{align*}
\gtop(K) = \tu(K) =\frac{|\sigma(K)|}{2}=  \frac{u_1+u_2}{2} +2\ell.
\end{align*}
The proof works as in \Cref{ex:exactex1}. After $\frac{u_1+u_2-4}{2}$ positive to negative crossing changes in $\beta$ we obtain the braid $\delta^{3\ell +1}a^2 b^2$, which we can untwist with $2\ell +2$ twists as follows. For $\ell = 0$, the braid $\delta a^2 b^2$ turns into $\delta$ by two crossing changes. For $\ell = 1$, we have 
\begin{align*}
\delta^4 a^2b^2 &= 
\delta^3 xax ab^2 
= \delta^2 bx^2 b x ab^2 
= \delta ab^3 a b x ab^2 
= xa^4 x a b x ab^2 \\&\sim ab^4abxabxx \twist  ab^4x,
\end{align*}
which can be untwisted using two crossing changes. 
For $\ell \geq 2$, we have
\begin{align*}
\delta^{3\ell +1}a^2 b^2 &=\delta^{3\ell-3} xa^4 x a b x ab^2 \sim \delta^{3\ell-3} ab^4abxabxx \twist\delta^{3\ell-3} ab^4x \\
&= \delta^{3\ell-4} x a^3 ba bx  
= \delta^{3\ell-5} b x^3 a^2 x a bx 
= \delta^{3\ell-6} a b^3 x^3 b x a bx 
 \\&\sim \delta^{3\ell-6} a^3 b^3 abxabx \twist \delta^{3\ell-6} a^3 b^3 \sim \delta^{3\ell-5} a^2 b^2 = \delta^{3(\ell-2)+1} a^2 b^2,
\end{align*}
which we can untwist inductively using the two base cases above. 
\end{example}

The following proposition improves the statement from \Cref{thm:2} for braid positive $3$-braid knots under the additional assumption $u_i \geq 2$ for the exponents in the Xu normal form of their braid representatives.
In fact, we can determine $\gtop(K)$ in this case up to an error of $1$, using $\frac12 |\sigma(K)|$ as a lower bound. 

\begin{proposition}\label{prop:braidposbound}
Let $K$ be a knot that is the closure of a $3$-braid in Xu normal form
\begin{align*}
\delta^n \tau_1^{u_1} \tau_2^{u_2} \dots \tau_t^{u_t}
\quad\text{for}\quad t\geq 1,\ n \geq \frac{t}{2},\ u_1, \dots, u_t \geq 2.
\end{align*}
Then $K$ is a braid positive knot and 
\begin{align}\label{eq:braidposbound}
\frac{n+t}{3}-1 = g(K) - \frac{|\sigma(K)|}{2} \geq g(K) - \gtop(K) \geq 
\frac{n+t}{3}-2.
\end{align}
\end{proposition}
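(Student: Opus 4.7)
Braid positivity of $K$ follows directly from \cref{prop:positivity}(2): the hypothesis $n\geq t/2$ holds, and the degenerate case $n=0,t=1$ of that proposition is excluded by $u_1\geq 2$. For the equality $\frac{n+t}{3}-1 = g(K)-\frac{|\sigma(K)|}{2}$ I would substitute the Seifert genus formula \eqref{eq:genus} and the signature formula of \cref{prop:signature} into the left-hand side, using that braid positive knots have non-positive signature so $|\sigma(K)|=-\sigma(K)$. The upper bound $g(K)-\gtop(K)\leq\frac{n+t}{3}-1$ of \eqref{eq:braidposbound} is then immediate from the Kauffman--Taylor inequality $|\sigma(K)|\leq 2\gtop(K)$.

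The real content is the lower bound $g(K)-\gtop(K)\geq\frac{n+t}{3}-2$, equivalently $\gtop(K)\leq\frac{|\sigma(K)|}{2}+1$. Since $\gtop(K)\leq\tu(K)$ by McCoy's theorem, it suffices to exhibit an untwisting sequence of length at most $\frac{|\sigma(K)|}{2}+1$. My plan is to produce it in two phases, generalising the explicit computations of \cref{ex:exactex1,ex:exactex2}. In the first phase, perform positive-to-negative crossing changes to reduce each exponent $u_i$ to $\varepsilon_i\in\{2,3\}$ of the same parity as $u_i$ (possible because $u_i\geq 2$). This uses $\sum_i\lfloor(u_i-2)/2\rfloor$ crossing changes and yields a positive 3-braid knot $K_0=\cl(\delta^n\tau_1^{\varepsilon_1}\cdots\tau_t^{\varepsilon_t})$ with the same permutation type as $K$; applying \cref{prop:signature} to both positive braids one obtains $\frac{|\sigma(K_0)|}{2}=\frac{|\sigma(K)|}{2}-\sum_i\lfloor(u_i-2)/2\rfloor$.

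In the second phase, show $\tu(K_0)\leq\frac{|\sigma(K_0)|}{2}+1$. The key ingredients are the cyclic-shift identities $\tau_i\delta=\delta\tau_{i+1}$ and the centrality of $\delta^3$, which allow us to regroup consecutive exponents into blocks of the form $abxabx$, together with the move of \cref{fig:abxabx} that turns $abxabx$ into the empty braid at the cost of two twists. Iterating this reduction, interspersed with occasional crossing changes as in \cref{ex:exactex1,ex:exactex2}, eventually brings $K_0$ to a torus knot $T(3,m)$ with small $m$, whose untwisting number is bounded by \cite[Lemma~5]{BBL}. Adding the two phases gives $\tu(K)\leq\frac{|\sigma(K)|}{2}+1$, as required. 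The main obstacle is carrying out the second phase with enough care: several sub-cases arise depending on $n\bmod 3$, on $t\bmod 6$, and on the pattern of the $\varepsilon_i$, each requiring an explicit cobordism-plus-twist construction analogous to, but more elaborate than, those worked out in \cref{ex:exactex1,ex:exactex2}. The assumption $u_i\geq 2$ (as opposed to $u_i\geq 1$, cf.\ \cref{rmk:abxexact}) is essential for the final count to remain within~$+1$ of~$\frac{|\sigma(K_0)|}{2}$.
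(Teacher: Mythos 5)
Your treatment of the braid positivity claim, the equality $\tfrac{n+t}{3}-1=g(K)-\tfrac{|\sigma(K)|}{2}$, and the upper bound via Kauffman--Taylor matches the paper exactly, and your identification of the real content as $\tu(K)\leq\tfrac{|\sigma(K)|}{2}+1$ is correct. However, the lower bound is where essentially all the work lies, and your proposal does not actually carry it out: the second phase is a plan (``regroup consecutive exponents into blocks of the form $abxabx$ \dots\ interspersed with occasional crossing changes \dots\ eventually brings $K_0$ to a torus knot''), and you yourself flag that the sub-cases ``each require an explicit cobordism-plus-twist construction'' that you have not supplied. That explicit construction \emph{is} the proof. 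The paper handles it by splitting on the parity of $t$, writing $n=3\ell+r$ (resp.\ $n=3\ell+r+2$), and using \emph{saddle moves} (not crossing changes) to reach one carefully chosen target braid $\beta'$ in which all exponents are $2$ except for three \emph{consecutive} exponents equal to $1$ and one $\delta$ converted into a single letter $\tau_{1-r}$; it then proves by induction on $r$ that $\beta'$ untwists to $\delta^{3\ell+1}$ in exactly $2r-2$ twists, each inductive step splitting off one $\delta$ to create a block $abxabx$ and applying the move of \cref{fig:abxabx}. The budget closes only because this target is engineered so that the three exponents equal to $1$ migrate through the word in lockstep with the induction.

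Your phase~1 is also a step in the wrong direction. Parity-preserving crossing changes leave you with an arbitrary pattern of exponents in $\{2,3\}$, and disposing of the $3$'s by further crossing changes (the only move that stays within the signature budget) produces exponents equal to $1$ in arbitrary positions. But braids with scattered exponents $1$ are precisely the regime excluded by the hypothesis $u_i\geq 2$ and precisely where the twisting technique is documented to fail or become delicate: for $\cl\bigl((abx)^{2k}abx^2abx^2\bigr)$ one has $\tu>\tfrac{|\sigma|}{2}+1$ for large $k$ (\cref{rmk:abxexact}), and \cref{ex:galg} lists braid positive knots with some $u_i=1$ for which no untwisting sequence matching the signature bound was found. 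So ``$\tu(K_0)\leq\tfrac{|\sigma(K_0)|}{2}+1$ for all $K_0$ with exponents in $\{2,3\}$ and $n\geq t/2$'' is not a smaller lemma you can defer to --- it is at least as hard as the proposition itself, and under your reduction arguably harder. The fix is to abandon parity preservation: use the flexibility of genus-minimizing saddle cobordisms ($K\leadsto K'$ via the Bennequin equality \eqref{eq:genus}) to reach the paper's single normal form $\beta'$, and then do the induction there.
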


\begin{proof}
Set $U = u_1 + \cdots + u_t$.
\Cref{prop:positivity} implies that $K$ is braid positive. Moreover, we have $\frac{|\sigma(K)|}{2} = \frac{U}{2}+\frac{2n}{3}-\frac{t}{3}$ by \Cref{prop:signature} and $g(K) = \frac{U}{2}+n-1$ by \eqref{eq:genus}. Using $\frac12 |\sigma(K)| \leq \gtop(K)$, 
it remains to show that $\gtop(K) \leq \frac{|\sigma(K)|}{2} + 1$.

We distinguish two cases depending on the parity of $t$. First, let $t = 2r$ be even for $r \geq 1$. 
The conditions $2n \geq t$ and $n + t \equiv 0\pmod{3}$ imply that we can write $n = 3\ell + r$ for $\ell \geq 0$, and $K$ is the closure of
\begin{align*}
\beta = \delta^{3\ell + r} \tau_1^{u_1} \tau_2^{u_2} \dots \tau_{2r}^{u_{2r}}. 
\end{align*}
The case $r=1$ ($t=2$) is covered by \Cref{ex:exactex2}, so we can further assume that $r \geq 2$. 
There is a smooth cobordism of Euler characteristic $4r-U-4$ from $K$ to the knot that is the closure of 
\begin{align*}
\beta^\prime = \tau_{1-r}\delta^{3\ell + r-1} \prod_{i=1}^{r-2} \tau_i^2 \tau_{r-1} \tau_{r} \tau_{r+1} \prod_{i={r+2}}^{2r} \tau_i^2.
\end{align*}
Indeed, we can use $U-4r+3$ saddle moves to replace all but three of the exponents $u_i$ by $2$ and the other three by $1$. We use a last saddle move to replace $\delta$ by $\tau_{1-r}$. 
We will prove by induction on $r$ that $\beta^\prime$ turns into $\delta^{3\ell+1}$ by $2r-2$ twists.
Since the closure of $\delta^{3\ell+1}$ is the torus knot $T(3,3\ell+1)$, this will imply
\begin{align}\label{eq:claim1}
\tu\left(\cl\left(\beta^\prime\right)\right) \leq \tu(T(3,3\ell+1)) +2r-2 =
\begin{cases}
  2\ell+2r-1  & \text{if } \ell \geq 1,  \\
  2r-2 & \text{if } \ell = 0,
\end{cases}
\end{align}
where the equality follows from \cite[Lemma~5 and Theorem~1]{BBL}. 
Recall that $\delta = \tau_{i+1} \tau_{i}$, $\tau_i \delta = \delta \tau_{i+1}$ (see \eqref{eq:xurules}) and $\tau_i = \tau_{i+3m}$ for all $m \in \Z, i \in \Z$. For $r=2$, we thus have $\tau_{1-r} = \tau_2$ 
and 
\begin{align*}
\beta^\prime &= \tau_{2}\delta^{3\ell + 1}  \tau_{1} \tau_{2} \tau_{3}  \tau_4^2 
= \delta^{3\ell} \tau_2 \tau_1 \tau_0 \tau_{1} \tau_{2} \tau_{3}  \tau_4^2 
= \delta^{3\ell} \tau_0 \tau_{-1} \tau_0 \tau_{1} \tau_{2} \tau_{3}  \tau_4^2 
\\&\sim \delta^{3\ell} \tau_2 \tau_{1} \tau_2 \tau_{3} \tau_{4} \tau_{5}  \tau_6^2  
\twist \delta^{3\ell} \tau_2  \tau_6 \sim 
\delta^{3\ell+1},
\end{align*}
so $\beta^\prime$ indeed turns into $\delta^{3\ell+1}$ using $2r-2=2$ twists in this case.
Now, for $r \geq 3$, consider
\begin{align*}
\beta^\prime 
&= \tau_{1-r}\delta^{3\ell + r-2} \prod_{i=1}^{r-3} \tau_{i-1}^2 \delta \tau_{r-2}^2\tau_{r-1} \tau_{r} \tau_{r+1} \tau_{r+2}^2\prod_{i={r+3}}^{2r} \tau_i^2
\\&= \tau_{1-r}\delta^{3\ell + r-2} \prod_{i=1}^{r-3} \tau_{i-1}^2 \tau_{r-3}\tau_{r-2} \tau_{r-3} \tau_{r-2}\tau_{r-1} \tau_{r} \tau_{r+1} \tau_{r+2}^2\prod_{i={r+3}}^{2r} \tau_i^2
\\&\sim \tau_{(1-r)-(r-4)}\delta^{3\ell + r-2} \prod_{i=1}^{r-3} \tau_{i-1-(r-4)}^2 \tau_{1}\tau_{2} \tau_{1} \tau_{2}\tau_{3} \tau_{4} \tau_{5} \tau_{6}^2\prod_{i={r+3}}^{2r} \tau_{i-(r-4)}^2
\\&\twist \tau_{(1-r)-r+1}
\delta^{3\ell + r-2} \prod_{i=1}^{r-3} \tau_{i-r}^2 \tau_{1}\tau_{2}  \tau_{3}\prod_{i={r+3}}^{2r} \tau_{i-r+1}^2
\\&\sim 
\tau_{(1-r)+1}\delta^{3\ell + r-2} \prod_{i=1}^{r-3} \tau_{i}^2 \tau_{r-2}\tau_{r-1}  \tau_{r}\prod_{i={r+1}}^{2(r-1)} \tau_{i}^2.
\end{align*}
The braid $\beta^\prime$ hence turns into $\tau_{(1-r)+1}\delta^{3\ell + r-2} \prod_{i=1}^{r-3} \tau_{i}^2 \tau_{r-2}\tau_{r-1}  \tau_{r}\prod_{i={r+1}}^{2(r-1)} \tau_{i}^2$ by two twists and inductively we get that $\beta^\prime$ turns into
\begin{align*}
\tau_{(1-r)+r-2}\delta^{3\ell + 1} \tau_{1}\tau_{2}  \tau_{3}\tau_{4}^2
\end{align*}
by $2(r-2)$ twists. Since $\tau_{(1-r)+r-2} = \tau_2$, this braid is the same as the one from the base case $r=2$ and therefore can be untwisted with two twists. We obtain that $\beta^\prime$ becomes $\delta^{3\ell+1}$ by $2r-2$ twists.
\Cref{eq:claim1} follows and we get
\begin{align*}
\gtop(K) &\leq \gtop \left(\cl\left(\beta^\prime\right)\right) 
+ \frac{U-4r+4}{2} 
\leq \tu \left(\cl\left(\beta^\prime\right)\right)+
\frac{U}{2} -2r+2   
\\&\leq 
\begin{cases}
  \frac{U}{2} +2\ell+1 =  \frac{|\sigma(K)|}{2} + 1  & \text{if } \ell \geq 1,  \\
  \frac{U}{2} = \frac{|\sigma(K)|}{2} & \text{if } \ell = 0.
\end{cases}
\end{align*}

Next, let $t = 2r+1$ be odd for $r \geq 0$. The conditions $2n \geq t$ and $n + t \equiv 0\pmod{3}$ imply that we can write $n = 3\ell + r+2$ for $\ell \geq 0$, and $K$ is the closure of
\begin{align*}
\beta = \delta^{3\ell + r+2} \tau_1^{u_1} \tau_2^{u_2} \dots \tau_{2r+1}^{u_{2r+1}}. 
\end{align*}
The case $t = 1$ is covered by \Cref{ex:exactex1}, so we can further assume that $r \geq 1$. 
There is a smooth cobordism of Euler characteristic $4r-U-2$ from $K$ to the knot that is the closure of 
\begin{align*}
\beta^\prime = \tau_{1-r}\delta^{3\ell + r+1} \prod_{i=1}^{r-1} \tau_i^2 \tau_{r} \tau_{r+1} \tau_{r+2} \prod_{i={r+3}}^{2r+1} \tau_i^2,
\end{align*}
similar to the cobordism considered in the above case. 
We prove by induction on $r$ that $\beta^\prime$ turns into $\delta^{3(\ell+1)+1}$ by $2r-2$ twists, hence 
\begin{align*}
 \tu\left(\cl\left(\beta^\prime\right)\right) \leq \tu(T(3,3(\ell+1)+1)) +2r-2 
 = 2\ell+2r+1.
\end{align*}
For $r=1$, we have 
\begin{align}\label{eq:caser=1}
\beta^\prime = \tau_{0}\delta^{3\ell + 2}  \tau_{1} \tau_{2} \tau_{3} 
= \delta^{3\ell+3} \tau_{2} \tau_{3} \sim \delta^{3(\ell+1)+1}.
\end{align}
For $r=2$, we have
\begin{align*}
\beta^\prime &= \tau_{2}\delta^{3\ell + 3}  \tau_1^2 \tau_{2} \tau_{3} \tau_{4}  \tau_5^2
= \tau_{2}\delta^{3\ell + 2}  \tau_0\tau_1 \tau_0\tau_1 \tau_{2} \tau_{3} \tau_{4}  \tau_5^2 \\&\sim \tau_{0}\delta^{3\ell + 2}  \tau_1\tau_2 \tau_1\tau_2 \tau_{3} \tau_{4} \tau_{5}  \tau_6^2
\twist \tau_{0}\delta^{3\ell + 2}  \tau_1\tau_2 \tau_6 \sim \delta^{3(\ell+1)+1}
\end{align*}
using \Cref{eq:caser=1} in the last step.
Now, for $r \geq 3$, consider
\begin{align*}
\beta^\prime 
&= \tau_{1-r}\delta^{3\ell + r} \prod_{i=1}^{r-2} \tau_{i-1}^2 \delta \tau_{r-1}^2\tau_{r} \tau_{r+1} \tau_{r+2} \tau_{r+3}^2\prod_{i={r+4}}^{2r+1} \tau_i^2
\\&= \tau_{1-r}\delta^{3\ell + r} \prod_{i=1}^{r-2} \tau_{i-1}^2 \tau_{r-2}\tau_{r-1}\tau_{r-2} \tau_{r-1}\tau_{r} \tau_{r+1} \tau_{r+2} \tau_{r+3}^2\prod_{i={r+4}}^{2r+1} \tau_i^2
\\&\sim \tau_{(1-r)-(r-3)}\delta^{3\ell + r} \prod_{i=1}^{r-2} \tau_{i-1-(r-3)}^2 \tau_{1}\tau_{2}\tau_{1} \tau_{2}\tau_{3} \tau_{4} \tau_{5} \tau_{6}^2\prod_{i={r+4}}^{2r+1} \tau_{i-(r-3)}^2
\\&\twist \tau_{(1-r)-r}\delta^{3\ell + r} \prod_{i=1}^{r-2} \tau_{i-r-1}^2 \tau_{1}\tau_{2}\tau_{3}\prod_{i={r+4}}^{2r+1} \tau_{i-r}^2
\\&\sim \tau_{(1-r)+1}\delta^{3\ell + r} \prod_{i=1}^{r-2} \tau_{i}^2 \tau_{r-1}\tau_{r}\tau_{r+1}\prod_{i={r+2}}^{2(r-1)+1} \tau_{i}^2.
\end{align*}
Inductively we get that $\beta^\prime$ turns into
\begin{align*}
\tau_{(1-r)+r-2}\delta^{3\ell + 3} \tau_{1}^2\tau_{2}  \tau_{3}\tau_{4}\tau_5^2
=\tau_{2}\delta^{3\ell + 3} \tau_{1}^2\tau_{2}  \tau_{3}\tau_{4}\tau_5^2
\end{align*}
by $2(r-2)$ twists, so into $\delta^{3(\ell+1)+1}$ by $2r-2$ twists. We obtain
\begin{align*}
\gtop(K) &\leq \gtop \left(\cl\left(\beta^\prime\right)\right)+ \frac{U-4r+2}{2} 
\leq \tu \left(\cl\left(\beta^\prime\right)\right)+
\frac{U}{2} -2r+1  
\\&\leq
  \frac{U}{2} +2\ell+2 = \frac{|\sigma(K)|}{2} + 1.\qedhere
\end{align*}
\end{proof}

\begin{remark}\label{rem:exactfamilies}
The proof of \Cref{prop:braidposbound} (more precisely, the first case with~$\ell = 0$) shows that the first inequality in \eqref{eq:braidposbound} is an equality when~$2n=t$.
\end{remark}

\begin{example}\label{ex:galg}
For knots $K$ of low Seifert genus arising as closure of positive 3-braids,
we can often apply the untwisting moves shown in \cref{sec:proofs}
and show $\frac12 \widehat{\sigma}(K) = \tu(K)$, thus determining $\gtop(K)$.
Here is a selection of positive 3-braids
that close off to knots of Seifert genus 6 and 7,
for which that strategy did not succeed:
\begin{align*}
\delta^3 a^2b^2xabx &\sim a^3b^3a^2b^2a^2b^2\\
\delta^4 a^2 bxab &\sim \Delta a^3b^2a^2b^2a^2\\
\delta^4a^4bxab &\sim \Delta a^5b^2a^2b^2a^2\\
\delta^4a^2b^2xa^2b &\sim \Delta a^3b^3a^2b^3a^2\\
\delta^6 a^2bx &\sim \Delta^3a^3b^2a^2.
\end{align*}
First, we note that for all of these knots $K$, we have the lower bound $\frac12 \hatsigma(K) = \frac12|\sigma(K)| = g(K)-2 \leq \gtop(K)$.
Second, in the search for upper bounds, we are able to find a knot $J$ such that $K\leadsto J$ and $\tu(J) = g(J) - 1$, thus proving $\gtop(K) \leq g(K) - 1$, for each of these knots $K$.  However, we are unable to find $J$ with $K\leadsto J$ and $\tu(J) = g(J) - 2$.
Nevertheless, we can prove $\gtop(K) = g(K)-2$ for all of these knots $K$ in a different way,
which is practical for individual knots with low Seifert genus. 
Namely, a computer search~\cite{Lew} reveals that the \emph{algebraic genus} $g_{\text{alg}}(K)$, which is defined in terms of Seifert matrices of $K$ and provides 
an upper bound $\gtop(K) \leq g_{\text{alg}}(K)$~%
\cite{FL}, satisfies $g_{\text{alg}}(K) \leq g(K) - 2$ for all our knots $K$.
Yet it remains an open question whether $\tu(K) = g(K) - 1$ or $\tu(K) = g(K) -2$
for those knots $K$.
\end{example}

\begin{remark}\label{rem:sign}
The maximal Levine--Tristram signature, see \eqref{eqn:sigmahat} in \cref{sec:proofs}, provides a good computable lower bound for $\gtop$:
\begin{align*}
\widehat{\sigma}(K) = \underset{\omega \in S^1 \backslash \Delta_K^{-1}(0)}{\operatorname{max}} \left \vert \sigma_\omega(K) \right \vert \leq 2\gtop(K).
\end{align*}
The function $S^1 \to \Z$, $\omega\mapsto \sigma_{\omega}(K)$,
is piecewise constant and jumps only at zeroes of the Alexander polynomial $\Delta_K$. A priori, its maximum absolute value may be assumed anywhere on $S^1\setminus \{1\}$.

\begin{figure}[t]
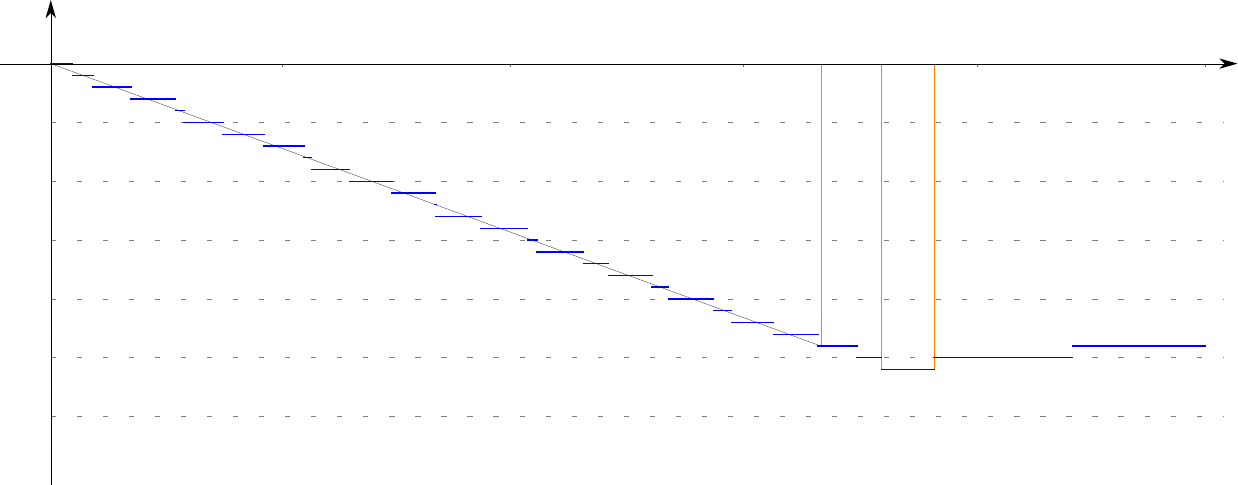
\caption{In blue, the graph of the Levine--Tristram signature $\sigma_{e^{2\pi i t}}(K)$
for $t \in \left[0,\frac12\right]$ and $K$ the closure of the 3-braid
$\left(a^2b^2\right)^8\left(a^5b^5\right)^4 \sim \delta^{12}\left(a^4b^4x^4\right)^2a^4b^4(xab)^5x$.
In black, the linear approximation by \cite[Corollary~4.4]{GG}
for $t \in [0,\frac13]$.
The maximum absolute value $\widehat{\sigma}(K)$
of $\sigma_{e^{2\pi i t}}(K)$,
which equals $|\sigma(K)| + 4 = |\sigma_{\zeta3}(K)| + 4$,
is assumed between $0.3599$ and $0.3826$ (rounded).
This graph was drawn with sage~\cite{Sage}, by computing
the signatures
of the Hermitian matrices $(1-\omega)A + (1-\overline{\omega})A^{\top}$,
for $A$ a Seifert matrix of $K$, and $\omega$
between the roots of $\Delta_K$ on $S^1$.}
\label{fig:sigprofile}
\end{figure}
In the above examples and \Cref{prop:braidposbound} 
we have seen that for certain families of $3$-braid knots, $\widehat{\sigma} = |\sigma| = 2\gtop$,
where $\sigma = \sigma_{-1} = \sigma_{e^{\pi i}}$ is the classical knot signature.
This also holds for the $T(3,3k+m)$ torus knots with $m\in\{1,2\}$ and odd $k\geq 1$ \cite{BBL}.
For even~$k$ on the other hand, e.g.~for $T(3,7)$,
one finds $\widehat{\sigma } = |\sigma_{\omega}| = |\sigma| + 2$
for $\omega$ chosen only one jump-point of the Levine--Tristram signature away from $e^{\pi i}$,
i.e.~$\omega = e^{2 \pi i t}$ for
\[ t \in \left(\frac12 - \frac5{18k + 6m},\ \frac12 - \frac1{18k + 6m}\right). \]
This observation relies on the fact that the jumps of the Levine--Tristram signatures of torus knots
are well understood~\cite{Lit, Ban}.
We have also seen examples where $\hatsigma = |\sigma_{\zeta3}|$, namely the closures of $(abx)^{2k} abx^2abx^2$ for $k \geq 0$; see \cref{rmk:abxexact}.
Overall, whenever we could precisely determine the topological 4-genus of a 3-braid knot $K$,
then the maximum absolute value of $\sigma_{e^{2 \pi i t}}$ was either assumed at $t = \frac{1}{3}$,
or at $t = \frac{1}{2}$, or close to $t = \frac{1}{2}$.

However, ${\widehat{\sigma}(K) - \max\left(|\sigma_{\zeta3}(K)|, |\sigma(K)|\right)}$ is unbounded for $K$ ranging over positive $3$-braid knots.
Indeed, let $K_n$ be the closure of $\left(a^2b^2\right)^{2n} \left(a^5b^5\right)^n$, which is a knot if $n$ is not a multiple of 3. The Levine--Tristram signatures of $K_4$ are shown in \cref{fig:sigprofile}.
Using \cref{prop:garsidesig} and 
\cite[Corollary~4.4]{GG} (as in \eqref{eqn:sigmahat}), we find
\begin{equation}\label{eq:sigandzeta3}
\sigma(K_n) = -12n, \quad \sigma_{\zeta3}(K_n) \approx -12n,
\end{equation}
with an error of at most $2$. Let us now estimate $\widehat{\sigma}(K_n)$.
We use the fact
that $K_n$ can be transformed by $n+10$ saddle moves into the
link $J_n = T(2,10)^{\#(n-1)} \# T(3,6n) \# T(2,-4n)$.
The latter satisfies, up to a constant error (i.e.~an error that is independent of $n$),
\[
\sigma_{e^{6 \pi i/7}}(J_n) \approx
-9n - 8n + \frac67\cdot 4n = -(13+\frac{4}{7})n,
\]
which implies that (again with constant error)
\[
\widehat{\sigma}(K_n) \geq |\sigma_{e^{6 \pi i/7}}(K_n)|
\geq |\sigma_{e^{6 \pi i/7}}(J_n)| - n - 10 \approx (12+\frac{4}{7})n.
\]
This estimate, combined with \eqref{eq:sigandzeta3}, shows that
\[
\lim_{n\to\infty} \widehat{\sigma}(K_n) - \max\left(|\sigma_{\zeta3}(K_n)|, |\sigma(K_n)|\right) = +\infty.
\]

This last example shows that determining $\widehat{\sigma}$ and $\gtop$ for all 3-braid knots, or even just closures of positive 3-braids, could be a hard problem.
\end{remark}

\bibliographystyle{alpha}

\end{document}